\theoremstyle{plain}
\newtheorem{theorem}{Theorem}
\newtheorem{lemma}{Lemma}[section]
\newtheorem{corollary}{Corollary}[theorem]
\theoremstyle{definition}
\newtheorem{problem}{Question}[section]
\newtheorem{defn}{Definition}
\newtheorem{conj}{Conjecture}
\numberwithin{equation}{section}
\newcommand{\sig}{\sigma}
\newcommand{\perm}{\mathbf{P}}
\newcommand{\id}{\mathbf{id}}
\newcommand{\rev}{\mathbf{rev}}
\newcommand{\Arch}{\mathcal{A}}
\newcommand{\Abf}{\mathbf{A}}
\newcommand{\Pspace}{\mathcal{P}}
\newcommand{\eps}{\epsilon}
\newcommand{\uni}[1]{\mathrm{Uniform}[#1]}
\newcommand{\Par}{\Pi}
\newcommand{\M}{\mathcal{M}}
\newcommand{\R}{\mathbb{R}}
\newcommand{\RSN}{\mathbf{RSN}^n}
\newcommand{\Sn}{\mathfrak{S}_n}
\newcommand{\Uni}{\mathrm{Uniform}[-1,1]}
\newcommand{\C}{\mathbf{C}}
\newcommand{\Was}[1]{W(#1)}
\newcommand{\pr}[1]{\mathbb{P}\left [ #1 \right]}
\newcommand{\E}[1]{\mathbb{E}\left [ #1 \right ]}
\newcommand{\ind}[1]{\mathbf{1}_{\{#1\}}}
\newcommand{\len}[1]{\mathcal{E}[#1]}
\title{Geometry of Permutation Limits}
\author{Mustazee Rahman}
\address{Department of Mathematics,
	MIT, Cambridge, MA, USA.}
\email{mustazee@gmail.com}
\author{B\'alint Vir\'ag}
\address{Department of Mathematics, University of Toronto, Toronto, ON, Canada and \newline
	MTA Alfred R\'{e}nyi Institute of Mathematics, Budapest, Hungary.}
\email{balint@math.toronto.edu}
\author{M\'at\'e Vizer}
\address{MTA Alfred R\'{e}nyi Institute of Mathematics, Budapest, Hungary.}
\email{vizermate@gmail.com}
\date{}
\begin{document}

\maketitle

\begin{abstract}
This paper initiates a limit theory of permutation valued processes,
building on the recent theory of permutons.
We apply this to study the asymptotic behaviour of random sorting networks.
We prove that the Archimedean path, the conjectured
limit of random sorting networks, is the unique path from the identity to the
reverse permuton having minimal energy in an appropriate metric.
Together with a recent large deviations result (Kotowski, 2016), it implies
the Archimedean limit for the model of relaxed random sorting networks.
\end{abstract}

\section{Introduction} \label{sec:intro}

The objective of this paper is two-fold. First, to develop a limit theory of permutation valued
stochastic processes that it is applicable to study their asymptotic properties.
Second, to apply this theory to study random sorting networks.

\subsection{Permutations limits} \label{sec:intro1}

Recently, the language of permutons has been developed to study asymptotic properties of permutations.
Examples within this theory include the study of finite forcibility, pattern avoidance, property testing, pseudorandomness,
the Mallows model, etc. (see \cite{Kral, permuton, KKRW, St} and references therein).
We begin with a discussion of permutons as it is pertinent to Theorem \ref{thm:permutonprocess}, our first result.

Let $\Sn$ be the symmetric group of $n$ elements. The \emph{empirical measure} of $\sig \in \Sn$ is
\begin{equation} \label{eqn:empirical}
\mu^{\sig} = \frac{1}{n} \sum_{i} \delta_{\big ( \frac{2i}{n} - 1, \frac{2\sig(i)}{n} - 1 \big )}.
\end{equation}
This is a probability measure of $[-1,1]^2$. One defines a sequence of permutations
$\{ \sig_n \}$ with size $|\sig_n| \to \infty$ to converge if $\mu^{\sig_n}$ converges
weakly to a Borel measure $\mu$.

A \emph{permuton} $\mu$ is a Borel probability measure on $[-1,1]^2$ with
uniform marginals. (In the literature, this definition is often w.r.t. the unit square $[0,1]^2$
but it is convenient for us to use $[-1,1]^2$.) It is proven in \cite{permuton} that limits of
permutations in the above sense are permutons and that every permuton may be realized
as a limit of permutations.

In this paper we study not a sequence of single permutations but rather
a sequence of sequences of permutations. For an integer $n$, let $[n] = \{1, \ldots, n\}$.
Suppose that
\begin{equation} \label{eqn:permutationprocess}
\sig^n = \big ( \sig^n_t; \, t \in [t_n] \big )
\end{equation}
is a $\Sn$-valued sequence. We say that $\sig^n$ is a \emph{permutation process} of $\Sn$,
or simply a permutation process when there is no ambiguity. We always set 
$\sig^n_0$ to be the identity permutation. Our goal is to find an appropriate representation
of the limits of sequences of permutation processes growing both in the size, $n$, of the permutations
and the length, $t_n$, of the sequence.

An example of a permutation process is a \emph{sorting network}.
A sorting network of $\Sn$ is a path of minimal length from the identity
$\id_n = 1,\ldots,n$ to the reverse permutation $\rev_n = n, n-1, \ldots, 1$ in the Cayley graph
of $\Sn$ generated by the adjacent transpositions $(i,i+1)$, for $1 \leq i \leq n-1$.
It is a permutation process of length $t_n = \binom{n}{2}$. The adjacent transpositions
are also called swaps. An example of a sorting network is the bubble sort algorithm
applied to $\rev_n$ and viewed in reverse time.

Another example of a permutation process is the interchange process on finite paths.
In this setting the permutation process is random. As the path lengths tend to infinity,
the interchange process converges in probability to a limit: stationary Brownian motion on an interval.
This is explained in Section \ref{sec:randperm}.

\subsection{Limits of permutation processes}
Given a permutation process $\sig^n$ of $\Sn$ as in (\ref{eqn:permutationprocess}),
the rescaled trajectory of particle $i \in [n]$ is the function
\begin{equation} \label{eqn:trajectory}
T^n_i (t/t_n ) = \frac{2\sig^n_t(i)}{n} - 1 \;\;\text{for}\; t \in [t_n].
\end{equation}
After linearly interpolating between the discrete times $t/t_n$, we may consider
$T^n_i$ as a continuous function from $[0,1]$ to $[-1,1]$. The \emph{trajectory process}
of $\sig^n$, denoted $X^n$, is the trajectory of a particle chosen uniformly at random: 
\begin{equation} \label{eqn:trajectoryprocess}
X^n = \frac{1}{n} \, \sum_{i=1}^n \, \delta_{T^n_i}.
\end{equation}
Observe that for every $t \in [t_n]$ the distribution of $X^n(t/t_n)$ is uniform over
the set $\{ \frac{2i}{n}-1; i \in [n]\}$ due to $\sig^n_t$ being a permutation.
Moreover, $\sig^n$ can be reconstructed from $X^n$ and $t_n$.

Let $\C$ denote the space of continuous functions from $[0,1]$ to $[-1,1]$ in
the topology of uniform convergence. The trajectory process is then a Borel
probability measure on $\C$. Given a sequence of permutation processes $\{\sig^n\}$,
its limit is defined to be the weak limit of its associated trajectory processes as Borel probability measures
on $\C$. In other words, $\{\sig^n\}$ converges if there is a stochastic process $X = (X(t), 0 \leq t \leq 1)$
with continuous sample paths such that for every uniformly continuous and bounded $F : \C \to \R$,
\begin{equation} \label{eqn:weaklimit}
\E{F(X^n )} = \frac{1}{n} \sum_{i=1}^n F(T^n_i) \;\; \overset{n \to \infty}{\longrightarrow}\;\; \E{F(X)}.
\end{equation}
Our first result characterizes the limits of permutation processes.

\begin{defn} \label{def:permprocess}
A \emph{permuton process} is a $[-1,1]$-valued stochastic process $X = (X(t), 0 \leq t \leq 1)$
with continuous sample paths and such that $X(t) \sim \Uni$ for every $t$.
\end{defn}

\begin{theorem} \label{thm:permutonprocess}
For each $n$, let $\sig^n = (\sig^n_t; \,t \in [t_n])$ be a permutation process of $\Sn$ with
$t_n \to \infty$ as $n \to \infty$. Suppose $\{\sig^n\}$ converges to a limit $X$ in the
sense of (\ref{eqn:weaklimit}). Then $X$ is a permuton process. Conversely, given
any permuton process $X$, there is a sequence of permutation processes that converges to $X$.
\end{theorem}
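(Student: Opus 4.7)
Suppose $\sig^n \to X$ in the sense of (\ref{eqn:weaklimit}); this is weak convergence of the probability measures $X^n$ on the Polish space $\C$. Since each trajectory lies in $\C$ and the set of continuous paths is closed under uniform convergence, any weak limit $X$ automatically takes values in $[-1,1]$ with continuous sample paths. For the uniform marginal, fix $t \in [0,1]$. By Prokhorov the family $\{X^n\}$ is tight in $\C$, and Arzel\`a--Ascoli translates tightness to equicontinuity in probability: for every $\eta > 0$ there exists $\delta > 0$ such that $\limsup_n \pr{\omega(X^n, \delta) > \eta} < \eta$, where $\omega(\cdot, \delta)$ denotes the modulus of continuity. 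Setting $k_n = \lfloor t \cdot t_n \rfloor$, for $n$ large enough that $1/t_n < \delta$ we have $|X^n(t) - X^n(k_n/t_n)| < \eta$ outside an event of probability $<\eta$. But $X^n(k_n/t_n)$ is by construction uniform on $\{2j/n - 1 : j \in [n]\}$, which converges weakly to $\Uni$; hence the law of $X^n(t)$ converges weakly to $\Uni$ as well. Combined with $X^n(t) \to X(t)$ in distribution (by continuity of evaluation at $t$), we conclude $X(t) \sim \Uni$.

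\textbf{Reverse direction.} Given a permuton process $X$, I construct random permutation processes converging to $X$ in probability, and then extract a deterministic sequence by diagonalization. Let $t_n \to \infty$ (say $t_n = n$). Sample $Y_1, \dots, Y_n$ as i.i.d.\ copies of $X$ in $\C$, and relabel them (possible almost surely since $X(0)$ is atomless) so that $Y_1(0) < \cdots < Y_n(0)$. Define the permutation process by $\sig^n_0 = \id_n$ and, for each $k \in [t_n]$, let $\sig^n_k(i)$ be the rank of $Y_i(k/t_n)$ in $\{Y_j(k/t_n)\}_{j \in [n]}$. The identity $2\sig^n_k(i)/n - 1 = 2\widehat{F}_{k/t_n}(Y_i(k/t_n)) - 1$, where $\widehat{F}_s$ is the empirical CDF of $(Y_j(s))_j$, combined with a Glivenko--Cantelli estimate uniform in $(y,s)\in[-1,1]\times[0,1]$ (obtained by discretizing time on a grid of scale $1/t_n$ and union-bounding), gives $\sup_{i,k}|T^n_i(k/t_n) - Y_i(k/t_n)| \to 0$ in probability. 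Extending this to interpolating times $s \in [k/t_n, (k+1)/t_n]$ contributes an additional error of order $\omega(Y_i, 1/t_n)$, which has vanishing average over $i$ by the law of large numbers since $X$ has continuous sample paths on $[0,1]$.

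Granted this approximation, for any bounded uniformly continuous $F : \C \to \R$,
\begin{equation*}
\frac{1}{n}\sum_{i=1}^n F(T^n_i) = \frac{1}{n}\sum_{i=1}^n F(Y_i) + o_{\mathbb P}(1),
\end{equation*}
and the first term on the right converges to $\E{F(X)}$ almost surely by Varadarajan's theorem for empirical measures on Polish spaces. Hence $\E{F(X^n)} \to \E{F(X)}$ in probability for every such $F$. Since weak convergence on the Polish space $\C$ is determined by a countable family of bounded uniformly continuous test functions, a diagonal extraction selects a deterministic realization of the $Y_i$'s along which $X^n \to X$ weakly, as required. The main obstacle I anticipate is establishing uniform Glivenko--Cantelli convergence jointly over $(y,s)$ together with uniform-in-$i$ control of sample-path moduli; this requires combining the tightness of the law of $X$ in $\C$ with concentration inequalities for empirical distributions.
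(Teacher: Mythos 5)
Your proof is correct and follows essentially the same route as the paper: in the forward direction you compare a fixed time to the nearest grid time using modulus-of-continuity control (the paper does this via Skorokhod representation rather than tightness/Arzel\`a--Ascoli, an immaterial difference), and in the reverse direction you use the paper's construction of $\sig^n$ from ranks of i.i.d.\ copies of $X$ at grid times, the same rank--empirical-CDF identity, a concentration bound union-bounded over the grid (DKW where the paper uses Bernstein conditionally on $Y_i(t)$), the modulus-of-continuity treatment of interpolation, and Varadarajan's SLLN on $\C$ before passing to a deterministic sequence. The only substantive difference is that the paper's summable Bernstein tails give almost-sure convergence, so one realization works for every $n$, whereas your in-probability statement needs either a subsequence or a choice of good realization separately for each $n$ --- a harmless one-line fix.
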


Theorem \ref{thm:permutonprocess} extends the limit theory of single permutations
to permutation processes. Indeed, if a sequence of permutation processes $\{\sig^n\}$
has a limit $X$ then for every $s \in [0,1]$ the limit of $\sig^n_{\lfloor s \, t_n\rfloor}$ is
the permuton with the distribution of $(X(0), X(s))$. Moreover, for any set of times $s_1, \ldots, s_k$,
the empirical measure of the $k$-tuples $\big ( \sig^n_{\lfloor s_j \,t_n \rfloor}(i); 1 \leq j \leq k \big)$
as $i$ ranges over $[n]$ converges weakly as measures rescaled onto $[-1,1]^k$
to the distribution of $(X(s_1), \ldots, X(s_k))$.

\subsection{Random sorting networks} \label{sec:intro2}

Recall from the previous section that a sorting network of $\Sn$ is a shortest
path from $\id_n$ to $\rev_n$ in the Cayley graph generated by adjacent transpositions.
The number of sorting networks of $\Sn$ was enumerated by Stanley \cite{Stanley},
and later a combinatorial bijection with staircase shaped Young tableaux was provided
by Edelman and Greene \cite{EG}.

The number of permutations in a sorting network of $\Sn$ is always $N := \binom{n}{2}$.
A \emph{random sorting network} of $\Sn$ is a sorting network of $\Sn$ chosen uniformly at random.
We denote this random permutation process as $\RSN = (\RSN_t; 0 \leq t \leq N)$ (thus,
$\RSN_t$ is the $t$-th permutation in $\RSN$).

The asymptotic behaviour of $\RSN$ was first studied by Angel et.~al.~\cite{RSN}.
It is shown that, as $n\to\infty$, the spacetime process of swaps of $\RSN$ converges
to the product of semicircle law and Lebesgue measure. It is also shown that, in the limit,
the particle trajectories are H\"older-1/2 continuous, and the support of the permutation
matrix lies within a certain octagon. Additional results about the asymptotic behaviour of
$\RSN$ have since been proved; see for example \cite{AGH} and the references therein.
However, the main conjecture of \cite{RSN}, the Archimedean path conjecture, remains open.
To state the conjecture and our results we first introduce the Archimedean measure.

The \emph{Archimedean measure} is the unique probability measure on the
plane with the property that all of its projections onto lines through the origin have the
$\Uni$ distribution. Its density, supported on the unit disk, is given by $\big (2\pi \sqrt{1 - x^2 - y^2} \big)^{-1}\,dxdy$.
It is in fact the projection of the normalized surface area measure of the
2-sphere onto the unit disk. Let $(\mathbf{A}_x,\mathbf{A}_y)$ denote a random variable whose
distribution is the Archimedean measure. The \emph{Archimedean process}
$\Arch = (\Arch(t); 0 \leq t \leq 1)$ is the permuton process defined by
\begin{equation} \label{eqn:Archprocess}
\Arch(t) = \cos(\pi t)\,\mathbf{A}_x + \sin(\pi t)\,\mathbf{A}_y.
\end{equation}

The Archimedean path conjecture \cite[Conjecture 2]{RSN} states that for every $t$ the
random permutation $\RSN_{\lfloor tN \rfloor}$ converges to the deterministic
permuton $(\Arch(0), \Arch(t))$. The \emph{Archimedean path} is the
permuton valued path $\Abf = (\Abf(t); 0 \leq t \leq 1)$ such that
\begin{equation} \label{eqn:Apath}
\Abf(t) \sim (\Arch(0), \Arch(t))\;\; \text{for every}\;\; 0 \leq t \leq 1.
\end{equation}
Thus, the Archimedean path conjecture is that the empirical measures of permutations
in $\RSN$ converges to the Archimedean path; see Figure \ref{fig:archpath}.
Observe that the Archimedean process is a random sine curve.
The sine curve conjecture \cite[Conjecture 1]{RSN} asserts that the trajectories
of particles are close to random sine curves with high probability; see Figure \ref{fig:sinecurves}.

\begin{figure}[htpb]
	\begin{center}
		\includegraphics[scale = 0.45]{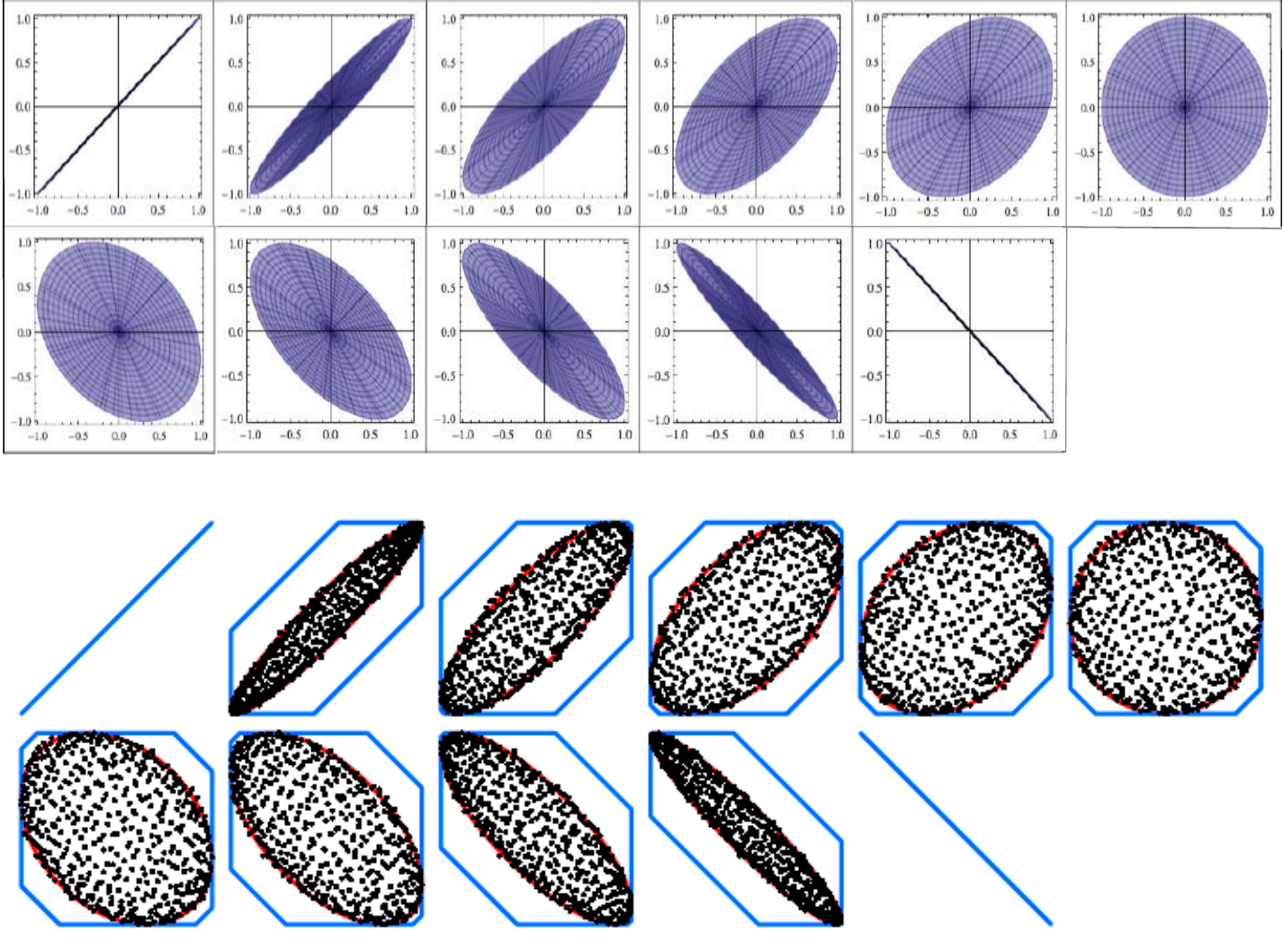}
		\caption{Support of the Archimedean path (top) and $\mathbf{RSN}^{500}$ (bottom).
		Bottom figure is from \cite[Figure 5]{RSN}.}
		\label{fig:archpath}
	\end{center}
\end{figure}

\begin{figure}[htpb]
	\begin{center}
		\includegraphics[scale = 0.45]{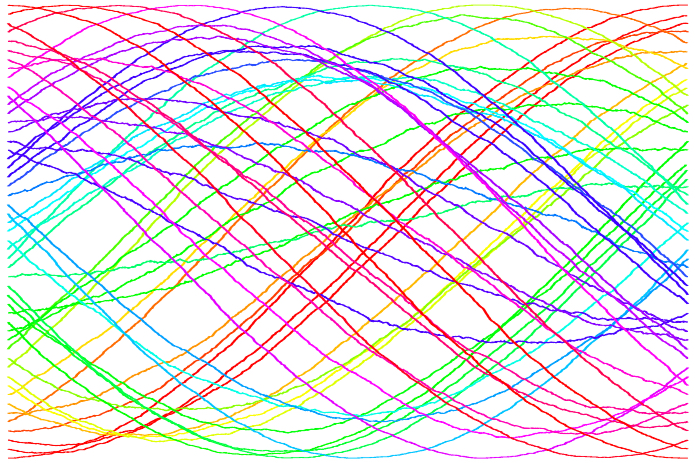}
		\caption{Some scaled particle trajectories from $\mathbf{RSN}^{2000}$ from \cite[Figure 1]{RSN}.}
		\label{fig:sinecurves}
	\end{center}
\end{figure}

The Archimedean path conjecture and the sine curve conjecture may be viewed
under a common framework as permuton processes as follows. It is a stochastic
process version of the Archimedean path conjecture (and implies it).
It also implies that typical trajectories of $\RSN$ are close to sine curves. 
\begin{conj} \label{conj}
$\RSN$ converges in probability as a permutation process to the Archimedean process (\ref{eqn:Archprocess}).
\end{conj}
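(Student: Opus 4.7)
The plan is to reduce Conjecture \ref{conj} to a variational problem plus a concentration-of-measure argument. First I would introduce an energy functional $\len{X}$ on permuton processes that measures the asymptotic per-swap cost of realizing $X$ as a limit of permutation processes. A natural choice is a Dirichlet-type quantity $\len{X} = \E{\int_0^1 |\dot X(t)|^2\,dt}$, interpreted via quadratic variation of sample paths, or equivalently the squared Wasserstein length of the permuton-valued path $t \mapsto \mathrm{Law}(X(0), X(t))$. The normalization should be chosen so that the empirical energy $\len{X^n}$ computed from a permutation process $\sig^n$ of $\Sn$ scales like $(t_n/n^2) \cdot (\text{number of active swaps})$, and so that sorting networks --- which use exactly $\binom{n}{2}$ swaps, the smallest possible for going from $\id_n$ to $\rev_n$ --- realize the infimum of $\len{\cdot}$ over all permuton processes joining the identity to the reverse permuton.

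Next I would prove that the Archimedean process is the \emph{unique} minimizer of $\len{\cdot}$ among permuton processes with $X(0) \sim \id$ and $X(1) \sim \rev$. The projection property of the Archimedean measure --- every one-dimensional projection is $\Uni$ --- suggests viewing the sample paths as coordinates of a 2D motion rotating at uniform angular speed, which should be optimal by a symmetrization argument: given any competitor, one can symmetrize rotationally while preserving the uniform marginals $X(t) \sim \Uni$ and strictly decreasing the energy unless the process already has the Archimedean form. The uniform-marginal constraint at every time, combined with the boundary conditions, should rigidify the minimizer completely.

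With these two ingredients any weak subsequential limit of $\RSN$ is forced, by semicontinuity of $\len{\cdot}$, to be an energy minimizer, hence to equal the Archimedean process; this already yields convergence in distribution at fixed times. To upgrade to convergence in probability as a permuton process, I would invoke a large deviations principle for $\RSN$ at the level of trajectory processes, so that the probability of deviating from the unique minimizer by $\eps$ decays exponentially.

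The main obstacle is precisely this last step. A full LDP is currently available only for the relaxed random sorting network model (Kotowski), where combining it with the uniqueness of the Archimedean minimizer resolves the corresponding version of Conjecture \ref{conj}. For the uniform $\RSN$, the variational picture reduces the conjecture to establishing such a concentration phenomenon, which I expect to be the genuinely hard combinatorial step and the reason Conjecture \ref{conj} remains open even after the energy-minimization analysis.
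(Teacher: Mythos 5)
The statement you were asked about is Conjecture \ref{conj}, which the paper does not prove: it remains open, and the paper only establishes the variational ingredient (Theorem \ref{thm:sinecurve}, via embedding permuton processes in the sphere of radius $1/\sqrt{3}$ in $L^2$ and the antipodal-geodesic Lemma \ref{lem:Hilbert}), a conditional convergence criterion (Corollary \ref{speed}), and the conjecture for \emph{relaxed} sorting networks by combining Theorem \ref{thm:sinecurve} with Kotowski's large deviations principle. Your outline follows essentially this program, and you correctly identify that the genuinely missing step for uniform $\RSN$ is a concentration-type input; your rotational-symmetrization sketch for uniqueness of the minimizer is vaguer than the paper's Hilbert-sphere argument, but it is the part that the paper actually supplies.

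There is, however, a concrete error in your third paragraph. Lower semicontinuity of the energy under weak convergence gives $\len{X} \leq \liminf_n \len{X^n}$, so it bounds the energy of a subsequential limit \emph{from above by the pre-limit energies}; to conclude that every subsequential limit is an energy minimizer you would need an a priori asymptotic upper bound $\limsup_n \len{X^n} \leq \pi^2/3$ for the trajectory processes of $\RSN$, and no such bound is known. The H\"older-$1/2$ estimate of Angel et.~al.\ only yields $\E{(X^n(t)-X^n(s))^2} \lesssim |t-s|$, which gives an energy contribution of order the number of partition intervals and hence no finite bound along refining partitions. Supplying exactly such a second-moment speed bound is the content of hypothesis (\ref{eqn:RSNconjecture}) in Corollary \ref{speed}, and it is precisely the open part of the problem; so your claim that the variational result ``already yields convergence in distribution at fixed times'' for $\RSN$ is unjustified. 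Note also that the paper's conditional route is weaker than what you ask for: given tightness and $\eps_n$-stationarity (both known for $\RSN$), one needs only the bound (\ref{eqn:RSNconjecture}), not a full large deviations principle --- the LDP is used, and is only available, for the relaxed model.
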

We emphasize that this conjecture states the random trajectory process
of $\RSN$ concentrates around a deterministic limit, which is the Archimedean process.

\subsection{Variational characterization of Archimedean process}
The (Dirichlet) energy of a stochastic process $X = (X(t); 0 \leq t \leq 1)$ is
$$ \len{X} = \, \sup_{\Par} \; \sum_{i=1}^k \, \frac{\E{|X(t_i)-X(t_{i-1})|^2}}{t_{i} - t_{i-1}},$$
where the supremum is over all finite partitions $\Par = \{0 = t_0 < t_1 < \cdots < t_k =1 \}$ of $[0,1]$.
If $X$ has continuously differentiable sample paths then $\len{X} = \int_0^1 \E{X'(t)^2} \,dt$.
Thus, for example, a simple calculation shows that $\len{\Arch} = \pi^2/3$.

\begin{theorem} \label{thm:sinecurve}
Among all permuton processes $X$ with the property that $X(1) = -X(0)$,
the Archimedean process $\Arch$ uniquely minimizes the energy.
\end{theorem}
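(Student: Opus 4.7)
The plan is to recast the problem as Fourier analysis on a Hilbert space, using the permuton constraint to pin down the minimizer. View $X$ as a continuous curve $t \mapsto X(t)$ in the Hilbert space $H = L^2(\Omega,\R)$, with the norm $\|Y\|_H^2 = \E{Y^2}$. The permuton condition $X(t) \sim \Uni$ forces $\|X(t)\|_H^2 = 1/3$, so the curve lies on the sphere of radius $r = 1/\sqrt{3}$ in $H$, and $X(1) = -X(0)$ makes its endpoints antipodal. A preliminary observation is that $\len{X}$ coincides with the standard $H^1$-energy $\int_0^1 \|X'(t)\|_H^2\,dt$ of an $H$-valued curve; this is a general fact about Hilbert-valued Sobolev functions, with the convention that the integral equals $+\infty$ when $X \notin H^1([0,1];H)$.

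Assuming $\len{X} < \infty$, the next step is to extend $X$ to $\R$ by setting $X(t+1) := -X(t)$. The equality $X(1) = -X(0)$ makes the extension continuous, it is $2$-periodic, and a short computation shows $\int_0^2 \|X'(t)\|_H^2\,dt = 2\len{X}$. Expanding in the Hilbert-valued Fourier basis on $[0,2]$, the anti-periodicity $X(t+1) = -X(t)$ kills every even-frequency term, so
$$ X(t) = \sum_{n \geq 1,\ n\text{ odd}} \bigl(a_n \cos(\pi n t) + b_n \sin(\pi n t)\bigr), \qquad a_n, b_n \in H. $$
Parseval combined with $\E{X(t)^2} = 1/3$ gives $\sum_n (\|a_n\|_H^2 + \|b_n\|_H^2) = 2/3$ and $\len{X} = \tfrac{\pi^2}{2}\sum_n n^2 (\|a_n\|_H^2 + \|b_n\|_H^2)$. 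Since $n \geq 1$ on every surviving term, this immediately yields $\len{X} \geq \pi^2/3$, with equality exactly when $a_n = b_n = 0$ for $n \geq 3$; in that case $X(t) = A\cos(\pi t) + B\sin(\pi t)$ for real random variables $A, B \in H$.

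To identify the minimizer as $\Arch$, I would invoke the marginal condition once more. The statement $X(t) \sim \Uni$ for every $t \in [0,1]$ says that the projection of the planar vector $(A,B)$ onto direction $(\cos\pi t,\sin\pi t)$ is $\Uni$. As $t$ ranges over $[0,1]$ these directions sweep the upper unit semicircle, and since $\Uni$ is symmetric under negation the property extends to every line through the origin. By the uniqueness of the Archimedean measure recalled in the introduction as the only planar probability measure with this property, $(A,B)$ has the Archimedean law, whence $X$ and $\Arch$ have the same finite-dimensional distributions. The main technical obstacle is not the geometry or the identification step but the Hilbert-valued Fourier apparatus: one must check carefully that the sup-over-partitions energy equals the $H$-valued $H^1$ energy and that the anti-periodic extension doubles it. Both are standard for Sobolev functions with values in a separable Hilbert space, but warrant an explicit lemma in the actual write-up.
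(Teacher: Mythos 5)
Your argument is correct, but it takes a genuinely different route from the paper. The paper also places the curve on the sphere of radius $1/\sqrt{3}$ in $L^2$, but then invokes a purely geometric lemma (its Lemma 5.1/Appendix) on minimal-energy paths between antipodal points of a Hilbert sphere, proved via the chord--angle relation, subadditivity of angles and Cauchy--Schwarz on partitions; equality forces $X(t)=\cos(\pi t)X(0)+\sin(\pi t)X(1/2)$, and then, exactly as you do, the marginal condition $X(t)\sim\Uni$ identifies $(X(0),X(1/2))$ as Archimedean via the uniform-projection characterization. You instead prove a Wirtinger-type inequality: anti-periodic extension to period $2$, Hilbert-valued Fourier expansion, Parseval, and the factor $n^2\ge 1$ on the surviving odd modes. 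Your route is arguably cleaner and slightly more general --- it only uses the integrated constraint $\int_0^1\E{X(t)^2}\,dt=1/3$ rather than the pointwise sphere condition, and it bypasses the spherical-geometry lemma entirely --- at the cost of the Bochner-space apparatus: you must prove that the sup-over-partitions energy of the paper equals $\int_0^1\|X'(t)\|_{L^2}^2\,dt$ (finite sup-energy gives absolute continuity with metric derivative in $L^2$, and differentiability a.e.\ uses the Radon--Nikodym property of Hilbert spaces), plus Parseval for $L^2([0,2];H)$; you correctly flag this as the lemma to be written out. Two small points to make explicit in a full write-up: the Fourier identity $X(t)=A\cos(\pi t)+B\sin(\pi t)$ holds a priori for a.e.\ $t$ (in $H$), and you should upgrade it to all $t$ by continuity of both sides before reading off marginals; and having identified $(A,B)=(X(0),X(1/2))$ as Archimedean, conclude equality of the laws of the processes on $\mathbf{C}$ (via continuity and agreement of finite-dimensional distributions), which is the sense in which the minimizer is unique --- the paper's own proof has the same implicit step.
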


The theorem allows for a characterization of stationary random permutation processes which converge to
the Archimedean process in terms of the second moment of their speed. Random sorting networks are
invariant under $\eps$-shifts which take a trajectory $X(t)$ to $X(\eps+t)$. More precisely, the $\eps$-shift
makes the trajectory periodic with reversing boundary conditions, so the $\eps$-shift is defined as
$$\left((-1)^{\lfloor \eps+t \rfloor}X(\eps+t \text{ mod }1),\; 0\le t \le 1 \right).$$
We call a random trajectory process $\eps$-stationary if its distribution is invariant under
$\eps$-shift of all individual particle trajectories.

\begin{corollary}\label{speed}
Let $X^n$ be a tight sequence of random trajectory processes that are $\eps_n$-stationary with $\eps_n \to 0$.
Assume that
\begin{equation} \label{eqn:RSNconjecture}
\limsup_{t \to 0} \,\limsup_{n \to \infty} \;\frac{\E{(X^n(t)-X^n(0))^2}}{t^2} \leq \frac{\pi^2}{3}\,.
\end{equation}
Then the sequence converges in probability to the deterministic limit given by the Archimedean process.
\end{corollary}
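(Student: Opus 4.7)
The plan is to show that every subsequential limit $Y$ of $(X^n)$, viewed as a random probability measure on $\C$, equals the Archimedean process almost surely; this is equivalent to the stated convergence in probability. By tightness I may pass to a subsequence along which $X^n \to Y$ in distribution as random elements of $\Pspace(\C)$. Let $Z^n$ denote the annealed trajectory---first sample the random permuton process $X^n$, then draw a single trajectory from it---so the law of $Z^n$ on $\C$ equals $\E{X^n}$. Along the subsequence $Z^n \to Z$ in distribution with the law of $Z$ equal to $\E{Y}$, and since $Z^n(t)$ is uniform on $\{\tfrac{2i}{n}-1 : i \in [n]\}$ for every $t$, the limit $Z$ is a permuton process; moreover the same reasoning applied inside $Y$ shows that for a.e.\ realization $\omega$ the measure $Y^\omega$ is itself a permuton process.

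Next I transfer the $\eps_n$-stationarity to the limit. Continuity of the $\eps$-shift map on $\C$ together with $\eps_n \to 0$ implies that both $Y$ and $Z$ are $\eps$-stationary in law for every $\eps \in [0,1]$. The reversing form of the shift gives $(Z(0), Z(1)) \stackrel{d}{=} (Z(\eps), -Z(\eps))$, forcing $Z(1) = -Z(0)$ almost surely; applying the same distributional identity to the random measure $Y$ with the test functional $\mu \mapsto \mu(\{\xi : \xi(1) = -\xi(0)\})$ shows that for a.e.\ $\omega$ the measure $Y^\omega$ is supported on trajectories $\xi$ with $\xi(1) = -\xi(0)$. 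Stationarity also makes $\E{(Z(s+h) - Z(s))^2}$ depend only on $h$.

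The energy bound $\len{Z} \le \pi^2/3$ follows from (\ref{eqn:RSNconjecture}): since $\xi \mapsto (\xi(t) - \xi(0))^2$ is bounded and continuous on $\C$, weak convergence gives $\limsup_{h \to 0} \E{(Z(h) - Z(0))^2}/h^2 \le \pi^2/3$, and by stationarity this bound extends to all small increments. For a sufficiently fine partition $\Pi$, each term in the defining sum for $\len{Z}$ is at most $(\pi^2/3 + o(1))(t_i - t_{i-1})$; since refining any partition can only increase that sum (an elementary Cauchy--Schwarz inequality), the supremum over partitions is achieved by fine ones and so $\len{Z} \le \pi^2/3$. Combined with Theorem \ref{thm:sinecurve} this forces $\len{Z} = \pi^2/3$ and $Z \stackrel{d}{=} \Arch$.

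The hard step is concentration: upgrading the identification $Z \stackrel{d}{=} \Arch$ of the mean to $Y = \Arch$ almost surely as a random measure. For each partition $\Pi$ the partition sum defining $\len{Z}$ is linear in the law of the process, so it equals the $\omega$-average of the corresponding sum for $Y^\omega$; monotone convergence along a refining sequence of partitions then yields the identity $\len{Z} = \E{\len{Y^\omega}}$. Applying Theorem \ref{thm:sinecurve} pointwise to each realization $Y^\omega$ (a permuton process satisfying the reversing endpoint condition by the second paragraph) gives $\len{Y^\omega} \ge \pi^2/3$ almost surely. The chain $\pi^2/3 = \len{Z} = \E{\len{Y^\omega}} \ge \pi^2/3$ forces $\len{Y^\omega} = \pi^2/3$ almost surely, and the uniqueness clause of Theorem \ref{thm:sinecurve} then yields $Y^\omega = \Arch$ for a.e.\ $\omega$, completing the proof.
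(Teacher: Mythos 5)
Your argument is correct and follows essentially the same route as the paper's proof: pass to a subsequential limit, bound its expected (annealed) energy by $\pi^2/3$ using the speed hypothesis, the inherited $\eps$-stationarity and monotonicity of partition sums under refinement, and conclude via the realization-wise lower bound and the uniqueness clause of Theorem \ref{thm:sinecurve}. The only difference is that you make explicit the annealed-versus-quenched energy bookkeeping and the endpoint condition $\xi(1)=-\xi(0)$ for the realizations, which the paper leaves implicit.
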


The trajectory process of $\RSN$ is $N^{-1}$--stationary because of stationarity of the
swaps of $\RSN$, \cite[Theorem 1]{RSN}. Tightness of the random trajectory process of
$\RSN$ follows from \cite[Theorem 3]{RSN}, which states that for any $\delta > 0$,
with probability tending to 1, all individual trajectories $T^n$ of the particles in $\RSN$ satisfy
$$|T^n(t)-T^n(s)| \leq \sqrt{8}\,|s-t|^{1/2} + \delta \;\;\text{for every}\;s,t.$$
Corollary \ref{speed} is proved in Section \ref{sec:sinecurve}.

Random sorting networks can also be studied in the setting
of large deviation theory of the interchange process on paths. Consider
the discrete time interchange process on the $n$-path, which is the path graph
with $n$ vertices. A random sorting network is the interchange process on the $n$-path
conditioned to be at $\rev_n$ in the shortest possible time $N$. Instead, one can consider
\emph{relaxed random sorting network}, which is the interchange process conditioned to be
close to  $\rev_n$ in time $n^{2+\alpha}$ for some $\alpha \in (0,1)$. One can study the
relaxed network using large deviation theory in the following sense.

Suppose we fix a permuton process $X$ that satisfies $X(0) = - X(1)$.
One can ask what is the probability that the trajectory of a relaxed random
sorting network on the $n$-path is close to $X$. This is the problem addressed
in \cite{KV}. It is shown that this probability satisfies a large deviation principle
whose rate function is the energy of $X$. Then Theorem \ref{thm:sinecurve}
implies relaxed random sorting networks have to be close to the Archimedean process
with high probability. This proves the Archimedean path conjecture for the relaxed networks.

\subsection{Variational characterization of Archimedean path}
The 2-Wasserstein distance (henceforth, Wasserstein distance) between two
Borel probability measures $\mu, \nu$ on a metric space $K$ is defined by
\begin{equation} \label{eqn:Wasserstein}
\Was{\mu,\nu}^2 = \inf_{\text{couplings}\; (V,W) \;\text{s.th.}\; V \sim \mu, \, W \sim \nu} \E{d(V,W)^2}.
\end{equation}
We study permutons in the Wasserstein metric, whereby $K = [-1,1]^2$
in the Euclidean metric. Let $\id$ denote the identity permuton $(X,X)$ and $\rev$
denote the reverse permuton $(X,-X)$, where $X \sim \Uni$.

\begin{theorem} \label{thm:uniquegeodesic}
Let $\mu = (\mu(t); 0 \leq t \leq 1)$ be a permuton valued path from $\mu(0) = \id$
to $\mu(1) = \rev$. Then the energy of $\mu$ in the Wasserstein metric
satisfies $\len{\mu} \geq \len{\Abf} = \pi^2/6$, where $\Abf$ is the Archimedean path
(\ref{eqn:Apath}). If there is equality then $\mu(t) = \Abf(t)$ for every $t$.
\end{theorem}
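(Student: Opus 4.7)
The plan is to reduce Theorem \ref{thm:uniquegeodesic} to Theorem \ref{thm:sinecurve} by lifting the permuton-valued path $\mu$ to a permuton process $X$ with energy exactly $2\len{\mu}$, and then invoking the lower bound $\len{X} \geq \pi^2/3$ for such processes. I may assume $\len{\mu} < \infty$, since otherwise the bound is trivial.

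By Lisini's theorem for absolutely continuous curves in the $2$-Wasserstein space over the Euclidean square $[-1,1]^2$, I would obtain a Borel probability measure $\eta$ on the space of continuous paths $\gamma = (\gamma_1,\gamma_2): [0,1] \to [-1,1]^2$ such that under $\eta$ the random variable $\gamma(t)$ has law $\mu(t)$ for every $t$, and
\[ \len{\mu} \;=\; \E{\int_0^1 \big(\dot\gamma_1(t)^2 + \dot\gamma_2(t)^2\big)\, dt}. \]
The endpoint conditions $\mu(0) = \id$ and $\mu(1) = \rev$ force $\gamma_1(0) = \gamma_2(0)$ and $\gamma_2(1) = -\gamma_1(1)$ almost surely.

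Next I define a random continuous trajectory $T:[0,1]\to[-1,1]$ by splicing $\gamma_1$ and $\gamma_2$ with opposite time orientations,
\[ T(s) = \gamma_1(1-2s)\ \text{for } s \in [0,1/2], \qquad T(s) = \gamma_2(2s-1)\ \text{for } s \in [1/2,1], \]
which is well-defined at $s=1/2$ because $\gamma_1(0) = \gamma_2(0)$. Let $X$ be the stochastic process with the distribution of $T$ under $\eta$. Since each $\mu(t)$ has uniform marginals, $X(s) \sim \Uni$ for every $s$, so $X$ is a permuton process, and $X(1) = \gamma_2(1) = -\gamma_1(1) = -X(0)$ places $X$ in the regime of Theorem \ref{thm:sinecurve}. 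A change of variables yields
\[ \len{X} \;=\; \int_0^1 \E{X'(s)^2}\, ds \;=\; 2\, \E{\int_0^1 \big(\dot\gamma_1(t)^2 + \dot\gamma_2(t)^2\big)\, dt} \;=\; 2\len{\mu}. \]
Combined with $\len{X} \geq \len{\Arch} = \pi^2/3$ from Theorem \ref{thm:sinecurve}, this gives $\len{\mu} \geq \pi^2/6$. Applying the construction to $\mu = \Abf$ with the symmetric parametrisation $\gamma_1(t) = R\cos(\phi-\pi t/2)$, $\gamma_2(t) = R\cos(\phi+\pi t/2)$ (where $(R\cos\phi, R\sin\phi)$ is Archimedean) produces $X \stackrel{d}{=} \Arch$ and hence $\len{\Abf} = \pi^2/6$.

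For the equality case, suppose $\len{\mu} = \pi^2/6$. Then $\len{X} = \pi^2/3$, so the uniqueness part of Theorem \ref{thm:sinecurve} forces $X \stackrel{d}{=} \Arch$. Since $(X(1/2-t/2),X(1/2+t/2)) = (\gamma_1(t),\gamma_2(t)) \sim \mu(t)$ by construction, $\mu(t)$ equals the joint law of $(\Arch(1/2-t/2),\Arch(1/2+t/2))$. This joint law is the push-forward of the Archimedean measure under a linear map that differs from the map defining $\Abf(t) = (\Arch(0),\Arch(t))$ only by a rotation of $\R^2$, and rotational invariance of the Archimedean measure then identifies the two push-forwards, giving $\mu(t) = \Abf(t)$ for every $t$. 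The main obstacle will be properly invoking Lisini's theorem to extract the lift $\eta$ with the correct energy identity; this is a standard but non-trivial result about the metric geometry of Wasserstein spaces. The remaining steps are routine: a change-of-variables computation for $\len{X}$, and an explicit $2\times 2$ matrix computation to confirm that the linear map in the equality case differs from the defining map of $\Abf$ by a rotation.
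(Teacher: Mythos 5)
Your proposal is correct in substance, but it takes a genuinely different route from the paper. The paper does not reduce Theorem \ref{thm:uniquegeodesic} to Theorem \ref{thm:sinecurve}: it lifts $\mu$ to a $[-1,1]^2$-valued process via its own realization theorem (Theorem \ref{thm:realization}), rotates coordinates by $\pi/4$, controls each rotated coordinate through a one-dimensional rigidity lemma (Lemma \ref{lem:1denergy}) applied to the standard deviations $(\sig_u,\sig_v)$, and finishes with the quarter-circle geodesic problem; the equality case is then pinned down by the "uniform projections onto all lines" characterization of the Archimedean measure. Your time-reversal splice $T(s)=\gamma_1(1-2s)$, $T(s)=\gamma_2(2s-1)$ is a slick alternative: it converts the permuton path directly into a permuton process with $X(1)=-X(0)$ and energy $2\len{\mu}$, so Theorem \ref{thm:sinecurve} does all the work, including uniqueness (your rotation-invariance identification of the law of $(\Arch(\tfrac12-\tfrac t2),\Arch(\tfrac12+\tfrac t2))$ with $\Abf(t)$ is correct). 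Two points deserve care. First, the inequality you actually need is $\len{X}\le 2\len{\mu}$, and a generic lift only gives the opposite bound (couplings dominate Wasserstein distances), so the energy identity of the lift is essential; Lisini's theorem supplies it, but you must also bridge the paper's partition-based energy with $\int_0^1|\mu'|(t)^2\,dt$ for finite-energy curves. You could avoid the external machinery entirely by using the paper's Theorem \ref{thm:realization}: with its optimal coupling, the splice and the factor $2$ can be checked purely at the level of partition sums (no pathwise derivatives), since coordinate energies add and time rescaling by $1/2$ doubles each partition sum. Second, the claim $\len{\Abf}=\pi^2/6$ needs both directions: your explicit parametrisation gives the upper bound via the coupling inequality, and the lower bound comes from the main inequality you just proved (the paper instead computes $\Was{\Abf(s),\Abf(t)}$ exactly in Lemma \ref{lem:Archdistance}). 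With these points made explicit, your argument is complete and arguably more economical, at the price of resting entirely on Theorem \ref{thm:sinecurve} rather than being self-contained within the Wasserstein setting.
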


The main tool used in proving Theorem \ref{thm:uniquegeodesic} should be of independent interest.
We show that for a permuton valued path $\mu$, there exists a $[-1,1]^2$--valued stochastic process $X$
such that the fixed time distributions of $X$ is given by $\mu$ and the energy of $X$ in the
$L^2$-metric equals the energy of $\mu$ in the Wasserstein metric. One may think of $X$
as being an optimal coupling of the measures along $\mu$. We prove such a `realization theorem'
for measure valued paths in a fairly general setting as stated in Theorem \ref{thm:realization}.

A motivation for Theorem \ref{thm:uniquegeodesic} is that the Wasserstein distance is a natural
metric on permutons. It is also related to sorting networks in the following way. A two-sided random sorting
network is a shortest sequence of permutations from $\id_n$ to $\rev_n$ so that in each step the permutation
is multiplied by an adjacent transposition either on left or on the right. This means that in each step,
two adjacent columns or two adjacent rows of the permutation matrix are exchanged. Thus the $1$s
in the permutation matrix can be thought of as particles moving horizontally or vertically. 

After scaling, we may consider the $[-1,1]^2$-valued trajectories for the $n$ particles in a uniformly
chosen two-sided sorting network. It can be shown that Conjecture \ref{conj} would imply that the
trajectory process of two-sided random sorting networks converges to an optimal coupling of the
Archimedean path $\Abf$.

\subsection{Permuton geometry} \label{sec:intro3}

Let $\Pspace$ denote the space of all permutons. Motivated by the great circle conjecture
about random sorting networks it is natural to study $\Pspace$ in the Wasserstein metric
since it is an infinite dimensional analogue of the permutohedron embedded into the Euclidean sphere.
(The permutohedron of order $n$ embeds naturally into an $(n-2)$-dimensional
sphere in $\R^n$.) In analogy with the sphere, one may ask whether the sum of distances squared,
$\Was{\id, \perm}^2 + \Was{\perm,\rev}^2$, is uniquely minimized by the Archimedean
measure over all $\perm \in \Pspace$?

\begin{theorem} \label{thm:distancesquared}
The function $\perm \mapsto \Was{\id,\perm}^2 + \Was{\perm,\rev}^2$ is minimized by a permuton $\perm \sim (X,Y)$
if and only if the pair $\big ( \frac{X-Y}{\sqrt{2}}, \frac{X+Y}{\sqrt{2}}\big)$ is also a permuton.
In particular, the Archimedean measure is not the unique minimizer.
\end{theorem}
In proving this theorem we will find a nice formula for the Wasserstein distance from
any permuton to the identity, as stated in Theorem \ref{lem:identitydistance}.

One can also ask whether there is a unique minimal energy path from $\id$
to a given permuton $\perm$? A motivation for this question is to understand
minimal length paths from the identity to arbitrary permutations of the permutohedron.
These are called reduced decompositions. Counting reduced compositions is a
deep and difficult combinatorial problem. We may get insights by studying related
questions in the space of permutons. For instance, are there analogues of the Archimedean path
conjecture for reduced decompositions of permutations approximating a
target permuton $\perm$? Can large deviation theory provide an asymptotic
count for the number of relaxed reduced decompositions of $\perm$,
\`{a} la sorting networks? We pose the following two open problems.

\begin{problem}[Uniqueness of minimal energy paths]
Under what condition does there exist an unique minimal energy path
in $\Pspace$ from $\id$ to a given permuton $\perm$? What about
for the Lebesgue permuton?
\end{problem}
\begin{problem}[Diameter of permuton space]
Suppose $\perm$ is a permuton. Does the minimal energy path(s) from $\id$ to $\perm$
have energy at least that of the Archimedean path?
\end{problem}

\subsection*{Remark}
Proof of the conjectures from \cite{RSN} have been announced in \cite{DD}. The proof uses
the framework of permuton processes and the main step involves proving Conjecture \ref{conj}.
The proof also relies on the local structure of random sorting networks and additional local-to-global
properties from \cite{ADHV, DV, GR}.

\subsection*{Outline of the paper} \label{sec:outline}

We prove Theorem \ref{thm:permutonprocess} in Section \ref{sec:permutonprocesses}.
In Section \ref{sec:energy} we define path energy in metric spaces and discuss some of its basic properties.
In Section \ref{sec:realization} we prove Theorem \ref{thm:realization} about realizing measure
valued paths as stochastic processes. In Section \ref{sec:sinecurve} we prove Theorem \ref{thm:sinecurve}
and Corollary \ref{speed}. In Section \ref{sec:ugproof} we prove Theorem \ref{thm:uniquegeodesic}.
Finally, in Section \ref{sec:permutongeometry} we prove Theorem \ref{thm:distancesquared}.

\section{Limits of permutation processes} \label{sec:permutonprocesses}

In proving Theorem \ref{thm:permutonprocess} we state a lemma about
approximating continuous processes by their piecewise linear parts.
The proof is in the Appendix.

\begin{lemma} \label{lem:processfacts}
Let $Y = (Y(t); 0 \leq t \leq 1)$ be a continuous $[-1,1]$-valued process.
Consider its modulus of continuity $m^{\delta}(Y) = \sup_{s,t: |s-t| \leq \delta} |Y(s) - Y(t)|$.
Then $\E{m^{\delta}(Y)} \to 0$ as $\delta \to 0$. Moreover, if $Y$ and $\hat{Y}$ are continuous
processes then $|m^{\delta}(Y)-m^{\delta}(\hat{Y})| \leq 2||Y-\hat{Y}||_{\rm{\infty}}$.
Finally, let $\mathrm{Lin}(n,Y)$ be the process obtained from $Y$ such that it agrees with $Y$
at times $t = i/n$ for $0 \leq i \leq n$ and is linear in between. Then
$\E{||\mathrm{Lin}(n,Y) - Y||_{\rm{\infty}}} \to 0$ as $n \to \infty$.
\end{lemma}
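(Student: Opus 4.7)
The lemma packages three standard facts about continuous $[-1,1]$-valued processes, and my plan is to handle them in the order stated, since the third follows easily from the first.

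For the first claim, I would note that each sample path $Y(\cdot,\omega)$ is continuous on the compact interval $[0,1]$, hence uniformly continuous, so $m^\delta(Y)(\omega) \to 0$ as $\delta \to 0$ almost surely. Since $Y$ takes values in $[-1,1]$, the random variable $m^\delta(Y)$ is deterministically bounded by $2$. The dominated convergence theorem then gives $\E{m^\delta(Y)} \to 0$.

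For the second (deterministic) claim, the key observation is the triangle inequality applied pointwise: for any $s,t$,
\[ |Y(s)-Y(t)| \le |\hat Y(s)-\hat Y(t)| + |Y(s)-\hat Y(s)| + |Y(t)-\hat Y(t)| \le |\hat Y(s)-\hat Y(t)| + 2\|Y-\hat Y\|_\infty. \]
Taking the supremum over $|s-t|\le \delta$ on both sides yields $m^\delta(Y) \le m^\delta(\hat Y) + 2\|Y-\hat Y\|_\infty$, and swapping the roles of $Y$ and $\hat Y$ gives the matching lower bound.

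For the third claim, I would use that $\mathrm{Lin}(n,Y)(t)$ is a convex combination of $Y(i/n)$ and $Y((i+1)/n)$ on each interval $[i/n,(i+1)/n]$. Hence for such $t$,
\[ |\mathrm{Lin}(n,Y)(t) - Y(t)| \le \max\bigl(|Y(i/n)-Y(t)|,\, |Y((i+1)/n)-Y(t)|\bigr) \le m^{1/n}(Y). \]
Taking the supremum over $t$ yields the deterministic bound $\|\mathrm{Lin}(n,Y)-Y\|_\infty \le m^{1/n}(Y)$, after which the first claim gives $\E{\|\mathrm{Lin}(n,Y)-Y\|_\infty} \to 0$. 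There is no real obstacle here; the only care needed is to confirm measurability of $m^\delta(Y)$ (which follows from continuity by replacing the sup over all $s,t$ with a sup over rationals) so that the expectation in the first claim is well defined.
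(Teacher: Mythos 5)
Your proposal is correct and follows essentially the same route as the paper: almost-sure uniform continuity plus bounded convergence for the first claim, the pointwise triangle inequality for the second, and a bound of $\|\mathrm{Lin}(n,Y)-Y\|_{\infty}$ by the modulus of continuity $m^{1/n}(Y)$ for the third. Your convexity argument in the last step even gives the slightly sharper constant $m^{1/n}(Y)$ in place of the paper's $2m^{1/n}(Y)$, but the idea is the same, and your remark on measurability of $m^{\delta}(Y)$ via a supremum over rationals is a sensible (if routine) addition.
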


\paragraph{\textbf{Proof that limit of permutation processes is a permuton processes.}}

Suppose that the trajectory processes $X^n$ of a sequence of permutation processes
$(\sig^n_t; t \in [t_n])$ converges to a continuous process $X$.
By Skorokhod's representation Theorem, we may assume
that the $X^n$ and $X$ are realized on a common probability space and
$||X^n-X||_{\rm{\infty}} \to 0$ almost surely. For a fixed $t$, we may choose
$s_n \in \{ i/t_n; 0 \leq i \leq t_n\}$ such that $|t-s_n| \leq 1/t_n$.
Then by triangle inequality and Lemma \ref{lem:processfacts},
$$|X(t) - X^n(s_n)| \leq ||X-X^n||_{\infty} + m^{1/t_n}(X^n) \leq 3 ||X-X^n||_{\infty} + m^{1/t_n}(X).$$
The term $m^{1/t_n}(X) \to 0$ almost surely in the sample outcomes of $X$ due to continuity,
and $||X^n-X||_{\infty} \to 0$ almost surely as well. Thus $|X(t) - X^n(s_n)| \to 0$ almost surely.
The distribution of $X^n(s_n)$ is uniform on the set $\{\frac{2i}{n} -1; i \in [n]\}$ as
remarked earlier. Therefore, $X^n(s_n)$ converges weakly to $\Uni$ and it follows that $X(t) \sim \Uni$.

\paragraph{\textbf{Proof that a permuton process is a limit of permutation processes.}}

Let $X = (X(t); 0 \leq t \leq 1)$ be the permuton process that is to be approximated
by permutation processes. We will construct a sequence of random permutation
processes and show that it converges almost surely to $X$. For $n \geq 1$ set
$\Par_n = \{ i/n; 0 \leq i \leq n\}$. The following defines a random permutation
process $(\sig^n_t; t \in [n])$ of $\Sn$.

Let $X_1, X_2, \ldots$ be i.i.d.~copies of $X$. We may assume that for every $t$ in the countable set
$\cup_n \Par_n$ the values $X_1(t), X_2(t), \ldots$ are all distinct. For each $n$,
consider the order statistics of $X_1(0), \ldots, X_n(0)$: $X_{(1)}(0) < X_{(2)}(0) < \cdots < X_{(n)}(0)$.
Let $\pi(i)$ be the index such that $X_{(i)}(0) = X_{\pi(i)}(0)$.  For $t \in [n]$, define the permutation $\sig^n_t$ by
$$\sig^n_t(i) = \;\text{rank of }\; X_{\pi(i)}(t/n) \;\text{among}\; X_1(t/n), \ldots, X_n(t/n).$$

For fixed $t \in [0,1]$, set $\Delta_{i,j} = \ind{X_j(t) \leq X_i(t)} - (X_i(t)+1)/2$.
As the rank of $x_i$ among $x_1, \ldots, x_n$ is $\sum_j \ind{x_j \leq x_i}$,
we have that for $t \in \Par_n$,
\begin{equation} \label{eqn:trajectorysum}
\frac{2\sig^n_{nt}(\pi^{-1}(i))}{n} -1 - X_{i}(t) = \frac{2}{n} \sum_{j=1}^n \Delta_{i,j}.
\end{equation}

Observe that $|\Delta_{i,j}| \leq 1$. Also, for $j \neq i$, $\E{\Delta_{i,j} \mid X_i(t)} = 0$.
This is where we use the fact that $X(t) \sim \Uni$ for every $t$. Moreover, for all $j$
such that $j \neq i$, the $\Delta_{i,j}$s are mutually independent conditional on $X_i(t)$.
Therefore by Bernstein's concentration inequality we infer that for $\eps \geq 0$ and every $i$,
\begin{equation} \label{eqn:Bernstein}
\pr{\frac{1}{n-1} \, | \sum_{j : j \neq i} \Delta_{i,j}| > \eps \; \Big |\; X_i(t)} \leq 2e^{-\frac{\eps^2(n-1)}{4}}.
\end{equation}

The right hand side (r.h.s.) of (\ref{eqn:trajectorysum}) is bounded in absolute value by
$\frac{2}{n} + \frac{2 | \sum_{j \neq i} \Delta_{i,j}|}{n-1}$. Therefore, taking an union bound
over all $t \in \Par_n$, setting $\eps = n^{-1/4}$ in (\ref{eqn:Bernstein}), then taking
expectation over $X_i(t)$ and another union bound over all particles $i = 1, \ldots, n$,
we infer that for all large $n$,
\begin{equation} \label{eqn:trajectorydeviation}
\pr {\sup_{1 \leq i \leq n, \, t \in \Par_n} \, \left | \frac{2\sig^n_{nt}(i)}{n}-1 - X_{\pi(i)}(t)\right | > 2n^{-1/4} + 2n^{-1}} \leq
2n^2\,e^{-\frac{n^{1/2}}{8}}.
\end{equation}

Recall that the trajectory of particle $i$ is $T^n_i(t) = (2/n)\sig^n_{nt}(i) - 1$ for $t \in \Par_n$,
and $T^n_i$ is linearly interpolated in between the times in $\Par_n$. Let
$$A_n = \sup_{1 \leq i \leq n, \; t \in \Par_n} \, | T^n_i(t) - X_{\pi(i)}(t)|\,.$$
Let $\mathrm{Lin}(n,X_i)$ be the piecewise linear function that agrees with $X_i$
at times $t \in \Par_n$. Observe that $||T^n_i-\mathrm{Lin}(n,X_{\pi(i)})||_{\infty} =
\sup_{t \in \Par_n} |T^n_i(t)-X_{\pi(i)}(t)|$ because both functions are
piecewise linear between the times $t \in \Par_n$. Let
$$B_n = \frac{1}{n}\, \sum_i ||X_i - \mathrm{Lin}(n,X_i)||_{\infty}.$$
From another application of Bernstein's inequality
we deduce that
$$\pr{B_n > \E{||X-\mathrm{Lin}(n,X)||_{\infty}} + n^{-1/4}} \leq e^{-\frac{n^{1/2}}{16}}.$$

The r.h.s.~above is summable over $n$ and Lemma \ref{lem:processfacts}
implies that $\E{||X-\mathrm{Lin}(n,X)||_{\infty}} \to 0$ as $n \to \infty$. Furthermore,
$\pr{A_n \geq 4n^{-1/4}}$ is summable over $n$ due to (\ref{eqn:trajectorydeviation}).
Therefore the Borel-Cantelli lemma implies that there is a subset $\Omega$ of outcomes
of the $X_i$'s having probability 1 such that both $A_n, B_n \to 0$ if $\omega \in \Omega$.

Let $I \sim \mathrm{Uniform}([n])$ and let $\mathbb{E}_I$ denote expectation w.r.t.~$I$,
that is, the outcomes of the $X_k$s are kept fixed. Then,
\begin{align*}
\mathbb{E}_I [ || T^n_I - X_{\pi(I)}||_{\infty}] & \leq \mathbb{E}_I [|| T^n_I - \mathrm{Lin}(n,X_{\pi(I)})||_{\infty}]
+ \mathbb{E}_I [|| \mathrm{Lin}(n,X_{\pi(I)})-X_{\pi(I)}||_{\infty}]\\
& \leq A_n + B_n.
\end{align*}
Consequently, if $\omega \in \Omega$ then for every uniformly continuous and
bounded $F: \C \to \R$ we have that $\mathbb{E}_I [F(T^n_I) - F(X_{\pi(I)})] \to 0$.

The distribution of $X_{\pi(I)}$ over the random $I$ is the empirical measure
$(1/n) \sum_i \delta_{X_i}$ on $\C$. As $\C$ is a Polish space, the strong law of large
numbers for empirical measures on Polish spaces \cite{Kallenberg}
implies that there is a subset $\Omega'$ of outcomes of the $X_k$s having probability 1 such
that $X_{\pi(I)}$ converges weakly to $X$ if $\omega \in \Omega'$.

If $\omega \in \Omega \cap \Omega'$ then for every function $F$ as above we have that
$\mathbb{E}_I [F(T^n_I)] \to \E{F(X)}$. We thus conclude that almost surely in the outcomes of the $X_k$s,
the function $T^n_I$ converges weakly to $X$. This means that the sequence $\{\sig^n\}$
converges almost surely to the process $X$. Selecting any such good outcome of the $X_k$s
gives a deterministic sequence of permutation processes converging to $X$.

\subsection{Convergence of random permutation processes} \label{sec:randperm}

The limit notion for permutation processes naturally defines the limit notion
for random permutation processes. More precisely, if $\sig^n$ is a sequence
of random permutation processes of $\Sn$ then it converges if its trajectory processes
converge weakly as random measures on $\C$. The limit in this case is a
random permuton process, that is, a measure on permuton processes.
We illustrate this with two contrasting examples.

First, consider the interchange process, $\mathrm{Int}^n$, on the $n$-path.
It is a random permutation process of $\Sn$ generated by first sampling i.i.d.~uniform
random adjacent transpositions $\mathbf{\tau}_1, \mathbf{\tau}_2, \ldots$, and then setting
$\mathrm{Int}^n_t = \mathrm{Int}^n_{t-1} \circ \mathbf{\tau}_t$ for $t \geq 1$ with $\mathrm{Int}^n_0 = \id_n$.
The stationary distribution of this process is the uniform measure of $\Sn$ and its
relaxation time is of order $n^3$. Thus, one expects a limit of this process to exist if
it is run until time $n^3$.

It is shown in \cite{RV} that if $\mathrm{Int}^n$ is run until time $n^3$ then it converges to a
deterministic permuton process: stationary Brownian motion on $[-1,1]$.
(Actually, \cite{RV} considers the continuous time interchange process but the conclusion also holds
for the discrete time process.) Observe that the trajectory of each particle of $\mathrm{Int}^n$
is a simple random walk on the $n$-path. So by Donsker's Theorem \cite{Kallenberg},
after appropriate rescaling, each trajectory converges to Brownian motion on $[-1,1]$.
However, this alone does not imply that the trajectory process converges to stationary Brownian motion.

The convergence to the deterministic limit occurs because the trajectories become
``asymptotically independent", which means the following. Let $X^n_{\omega}$ be
the trajectory process for a sample outcome $\omega$ of $\mathrm{Int}^n$.
Let $T^n_{I_1}$ and $T^n_{I_2}$ be two samples from $X^n_{\omega}$, that is,
the trajectory of two particles $I_1$ and $I_2$ chosen independently and uniformly at random 
from $X^n_{\omega}$. Then, for every continuous and bounded function $F: \C \to \R$,
$$ \mathbb{E}_{\,\omega, I_1,I_2} \left [ F(T^n_{I_1}) F(T^n_{I_2}) \right] -
\mathbb{E}_{\,\omega, I_1} \left [ F(T^n_{I_1}) \right]^2 \;\; \overset{n \to \infty}{\longrightarrow} \; 0.$$
Asymptotic independence suffices to ensure that random permutation processes
have deterministic limits; see \cite{RV} for the details.

The second example illustrates a random limit of permutation process.
Consider $n$ particles placed on the vertices of the $n$-cycle.
At each time step, rotate the cycle one unit clockwise or counter clockwise
(by $2\pi /n$ radians) independently and uniformly at random.
This gives a random permutation process $\sig^n$ whereby each particle performs a simple random
walk on the $n$-cycle. However, note that the distances between particles remain fixed.
When run until time $n^3$ this process has the following limit.
Periodic Brownian motion on $[-1,1]$, denoted $B^{\mathrm{periodic}}$, is Brownian motion started
from a uniform random point of $[-1,1]$ and run in a period by identifying the endpoints $\pm 1$.
Let $U \sim \Uni$ be independent of $B^{\mathrm{periodic}}$. The limit of $\{\sig^n\}$ is the random
permuton process $\omega \to X_{\omega}$ such that for a sample outcome $U_{\omega}$ of $U$,
$$X_{\omega} \,\overset{law}{=} \, U_{\omega} + B^{\mathrm{periodic}} \;\; \big (\mathrm{mod}\; [-1,1] \, \big).$$
In other words, $[-1,1]$ is first rotated by $U_{\omega}$ (by identifying $\pm 1$) and then
rotated independently according a periodic Brownian motion. Two samples from
$X_{\omega}$ provide two periodic Brownian motions that start from a common point $\omega$--almost surely.

\section{Metric and energy for permutons} \label{sec:energy}

Let $(K,d)$ be a metric space. A path $\gamma = (\gamma(t); 0 \leq t \leq 1)$ is
a continuous function from the interval $[0,1]$ into $K$. A \emph{finite partition} of the interval
$[a,b]$ is a set of ordered points $\Par = \{ a = t_0 < t_1 < \cdots < t_n = b\}$.
Let $\rm{Part}[a,b]$ denote the set of all finite partitions of the interval $[a,b]$.
The \emph{mesh size} of a partition $\Par$ is $\Delta(\Par) = \max_{1 \leq i \leq n} \{ |t_i - t_{i-1}|\}$.

The energy of a path $\gamma$ with respect to a partition $\Par \in \rm{Part}[0,1]$ is
$$ \len{\gamma,\Par} = \sum_{i=1}^n \frac{d^2(\gamma(t_i), \gamma(t_{i-1}))}{t_i - t_{i-1}}\,.$$
The energy of $\gamma$, denoted $\len{\gamma}$, is
\begin{equation} \label{eqn:energydefn}
 \len{\gamma} = \sup_{\Par \in \rm{Part}[0,1]} \Big \{ \len{\gamma, \Par} \Big \} \,.
 \end{equation}
The energy of $\gamma$ restricted to the interval $[a,b]$ is
$$\len{\gamma,[a,b]} = \sup_{\Par \in \rm{Part}[a,b]} \Big \{ \len{\gamma, \Par} \Big \}.$$
Notice that if $a \leq b \leq c$ then we have
\begin{equation} \label{eqn:energyinequality}
\len{\gamma,[a,c]} \geq \len{\gamma,[a,b]} + \len{\gamma,[b,c]}.
\end{equation}
In particular, if $t_0 \leq t_1 \leq \ldots \leq t_n$ then $\len{\gamma,[t_0,t_n]} \geq \sum_{i=1}^n \len{\gamma,[t_{i-1},t_i]}$.

For partitions $\Par, \Par'$ of $[0,1]$, we write $\Par \subset \Par'$ ($\Par'$ is a refinement of $\Par$)
if $\Par'$ contains all points of $\Par$. The energy of a path is non-decreasing under refinements,
as Lemma \ref{lem:refinement} below shows. The proof is in the Appendix. We will use this lemma
throughout our arguments.

\begin{lemma} \label{lem:refinement}
Suppose $\Par \subset \Par'$ are two finite partitions of $[0,1]$. Then
$\len{\gamma,\Par} \leq \len{\gamma,\Par'}$ for any path $\gamma$ in $K$.
\end{lemma}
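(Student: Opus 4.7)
}
The plan is to reduce the claim to the case of adding a single point and then iterate. Write $\Par' = \Par \cup \{s_1,\ldots,s_m\}$ where the $s_j$ are the points added to $\Par$. Let $\Par_0 = \Par$ and inductively $\Par_j = \Par_{j-1} \cup \{s_j\}$, so that $\Par_m = \Par'$. If I can show $\len{\gamma,\Par_{j-1}} \leq \len{\gamma,\Par_j}$ for each $j$, then chaining these inequalities gives the lemma.

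So it suffices to handle the one-point refinement. Suppose a partition $\Par = \{0 = t_0 < t_1 < \cdots < t_n = 1\}$ is refined by inserting a single point $s \in (t_{i-1}, t_i)$. All summands with index $\neq i$ in $\len{\gamma,\Par}$ and $\len{\gamma,\Par'}$ agree, so it suffices to prove the local inequality
\begin{equation*}
\frac{d^2(\gamma(t_{i-1}),\gamma(t_i))}{t_i - t_{i-1}} \;\leq\; \frac{d^2(\gamma(t_{i-1}),\gamma(s))}{s - t_{i-1}} + \frac{d^2(\gamma(s),\gamma(t_i))}{t_i - s}.
\end{equation*}
Set $a = d(\gamma(t_{i-1}),\gamma(s))$, $b = d(\gamma(s),\gamma(t_i))$, $p = s - t_{i-1}$, and $q = t_i - s$. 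By the triangle inequality, $d(\gamma(t_{i-1}),\gamma(t_i)) \leq a + b$, so the left hand side is at most $(a+b)^2/(p+q)$. I now use the Cauchy--Schwarz inequality in the form
\begin{equation*}
(a+b)^2 \;=\; \left( \sqrt{p}\cdot \frac{a}{\sqrt{p}} + \sqrt{q}\cdot \frac{b}{\sqrt{q}}\right)^2 \;\leq\; (p+q)\left(\frac{a^2}{p} + \frac{b^2}{q}\right),
\end{equation*}
and dividing by $p+q$ yields exactly the desired local inequality.

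The argument is entirely elementary: the only substantive input is Cauchy--Schwarz, used to show that splitting an interval can only increase the ``squared distance over elapsed time'' contribution. There is no real obstacle; the only thing to be careful about is verifying that the summands outside the split interval are unchanged under refinement, which is immediate from the definition of $\len{\gamma,\Par}$ as a sum indexed by consecutive pairs of partition points.
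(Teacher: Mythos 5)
Your proof is correct and uses essentially the same ingredients as the paper's: the triangle inequality on the subdivided interval followed by Cauchy--Schwarz with weights $\sqrt{p},\sqrt{q}$ to get the local inequality, then summing over the unchanged terms. The only cosmetic difference is that you insert one point at a time and induct, whereas the paper applies the same Cauchy--Schwarz estimate to all subdivision points of an interval in a single step.
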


Using Lemma \ref{lem:refinement} we observe that for a path $\gamma$ there is a
sequence of nested finite partitions $\Par_0 = \{0,1\} \subset \Par_1 \subset \Par_2 \ldots$
such that $\len{\gamma,\Par_n} \nearrow \len{\gamma}$. We may also assume that
$\Delta(\Par_n) \to 0$. Thus $\cup_n \Par_n$ is a dense set of points in $[0,1]$.

We consider paths in two types of metric spaces. First, given a probability space $(\Omega, \Sigma, P)$ we
take $K = L^2(P, [-1,1])$, the Hilbert space of all square integrable random variables $Z : \Omega \to [-1,1]$.
A path in $K$ is then a stochastic process $X = (X(t); 0 \leq t \leq 1))$. Permuton processes
fall within this setup. Second, we take $K$ to be the space of permutons $\Pspace$
in the Wasserstein metric, which is the setting of Theorem \ref{thm:uniquegeodesic}.
The Wasserstein metric induces the topology of weak convergence on $\Pspace$
(see Lemma \ref{lem:Wconvergence}), and $\Pspace$ is compact in the weak topology
by Phokhorov's Theorem.

Finite energy paths in the space of Borel probability measures of a compact
metric space $K$, under Wasserstein metric, are related to finite energy $K$-valued
stochastic processes, in $L^2$ metric. We may realize the former as the latter
in an energy preserving manner. This is the content of Theorem \ref{thm:realization}
below, which is used to prove Theorem \ref{thm:uniquegeodesic}.

\subsection{Realizing measure valued paths as stochastic processes} \label{sec:realization}

Throughout this section $(K,d)$ denotes a compact metric space and $\M(K)$ denotes
the space of Borel probability measures on $K$ in the Wasserstein metric.
A path $\gamma = (\gamma(t); 0 \leq t \leq 1)$ in $\M(K)$ is
\emph{realized} by a $K$-valued stochastic process $X = (X(t); 0 \leq t \leq 1)$ if the following conditions holds.
\begin{enumerate}
\item $X(t) \sim \gamma(t)$ for every $t$.
\item $X$ has continuous sample paths almost surely.
\end{enumerate}
The energy of $X$ is as given by (\ref{eqn:energydefn}) with respect to the $L^2$ metric:
$$d_{L^2}(X(t),X(s)) := \E{d(X(t),X(s))^2}^{1/2}.$$

\begin{theorem} \label{thm:realization}
Suppose that $\gamma$ is a $\M(K)$--valued path with finite energy
with respect to the Wasserstein distance. There is a stochastic process
$X$ that realizes $\gamma$ in a energy preserving manner: $\len{X} = \len{\gamma}$.
\vskip 0.1in
We refer to the process $X$ as an \emph{optimal coupling} of $\gamma$.
\end{theorem}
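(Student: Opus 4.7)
The plan is to realize $X$ as the weak limit of Markov chains on a refining sequence of partitions, where consecutive transitions use Wasserstein-optimal couplings. By Lemma \ref{lem:refinement}, fix nested partitions $\Par_1 \subset \Par_2 \subset \cdots$ of $[0,1]$ with $\{0,1\} \subset \Par_1$, mesh $\Delta(\Par_n) \to 0$, and $\len{\gamma,\Par_n} \nearrow \len{\gamma}$. For each $n$, construct a Markov chain $X^n$ on $\Par_n$: set $X^n(0) \sim \gamma(0)$, and given $X^n(t_{i-1}^n)$ draw $X^n(t_i^n)$ from the regular conditional distribution of an optimal Wasserstein coupling between $\gamma(t_{i-1}^n)$ and $\gamma(t_i^n)$, which exists by compactness of $K$. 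By construction $X^n(t) \sim \gamma(t)$ for every $t \in \Par_n$ and $\len{X^n, \Par_n} = \len{\gamma, \Par_n}$.

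To pass to a continuous-time object, isometrically embed $K$ into a Banach space $B$ (for example via the Kuratowski map $K \hookrightarrow C(K)$) and linearly interpolate $X^n$ in $B$ between consecutive times of $\Par_n$. Subdividing a linear segment in $B$ does not change its energy, so $\len{X^n} = \len{X^n, \Par_n} = \len{\gamma, \Par_n}$. A Cauchy-Schwarz estimate on expected sums of $\|X^n(t_i^n) - X^n(t_{i-1}^n)\|_B$ over intervals meeting a window $[s,t]$ gives the uniform bound $\E{\|X^n(s)-X^n(t)\|_B} \leq \sqrt{\len{\gamma}\,(|t-s|+\Delta(\Par_n))}$, yielding tightness of $\{X^n\}$ in $C([0,1], B)$. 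Extract a weak subsequential limit $X$. For each $t \in D := \bigcup_n \Par_n$ the marginal $X^n(t) \sim \gamma(t)$ for all sufficiently large $n$, and weak convergence preserves single-time marginals, so $X(t) \sim \gamma(t)$ for every $t \in D$. Since $D$ is countable and $K$ is closed in $B$, on a single full-measure event the continuous sample path of $X$ passes through $K$ at every $t \in D$ and hence, by density of $D$ and closedness of $K$, at every $t \in [0,1]$; combined with weak continuity of $\gamma$ this also extends $X(t) \sim \gamma(t)$ to all $t$.

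For the energy bound, fix any finite partition $\Par'$ of $[0,1]$. Each summand of $\len{X^n, \Par'}$ is the expectation of a bounded continuous function on $B^2$, so $\len{X^n, \Par'} \to \len{X, \Par'}$ along the convergent subsequence; combined with $\len{X^n, \Par'} \leq \len{X^n} = \len{\gamma, \Par_n} \to \len{\gamma}$, this gives $\len{X, \Par'} \leq \len{\gamma}$. Taking the supremum over $\Par'$ yields $\len{X} \leq \len{\gamma}$. The reverse inequality $\len{X} \geq \len{\gamma}$ is automatic from $\E{d(X(s),X(t))^2} \geq \Was{\gamma(s),\gamma(t)}^2$, so equality holds. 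The main technical obstacle is the interplay between the ambient Banach-space interpolation and the $K$-valued target: one needs a modulus estimate uniform in $n$ strong enough for tightness in $C([0,1], B)$, and must verify that the weak limit actually takes values in $K$ so that the energy and marginal identifications take place intrinsically in $(K,d)$.
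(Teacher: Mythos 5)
Your overall strategy is close in spirit to the paper's: both build couplings along a refining sequence of partitions using optimal Wasserstein couplings between consecutive marginals (your Markov-chain construction is exactly the paper's Lemma \ref{lem:finiterealization}, proved there via the disintegration theorem), then pass to a limit and compare energies, with the lower bound $\len{X}\geq\len{\gamma}$ coming for free from the coupling property. The difference, and the place where your argument has a genuine gap, is the passage to a continuous-time limit. You claim tightness of the interpolated processes in $C([0,1],B)$ from the estimate $\E{\|X^n(t)-X^n(s)\|_B}\leq\sqrt{\len{\gamma}\,(|t-s|+\Delta(\Par_n))}$. That moment bound does not yield tightness in the space of continuous paths: it is a Kolmogorov-type bound with exponent $1/2$ in $|t-s|$ for the first moment (equivalently, exponent exactly $1$ for the second moment), which is precisely the borderline case where the Kolmogorov--Chentsov tightness criterion fails; a compensated Poisson process satisfies $\E{|Y(t)-Y(s)|^2}=|t-s|$ and has jumps, so increment bounds of this strength cannot by themselves control the modulus of continuity uniformly in $n$. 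Since tightness in $C([0,1],B)$ is the step on which your whole limit construction, marginal identification, and energy comparison rest, this is a real gap rather than a routine omission.

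The gap is repairable, and the repair is essentially the mechanism the paper uses. The correct quantity to control is the pathwise energy $\mathcal{E}_n(\omega)=\sum_{t_i\in\Par_n} d^2\big(X^n_\omega(t_i),X^n_\omega(t_{i-1})\big)/(t_i-t_{i-1})$, whose expectation is $\len{\gamma,\Par_n}\leq\len{\gamma}$; by Markov's inequality, outside an event of probability $\len{\gamma}/M$ the path is H\"older-$1/2$ along $\Par_n$ with constant $\sqrt{M}$, and this does give a uniform-in-$n$ modulus bound in probability, hence tightness. The paper sidesteps tightness in $C$ altogether: it takes the limit in the compact product space $K^{\Par_\infty}$ (where Prokhorov applies trivially), proves $\len{X,\Par_\infty}\leq\len{\gamma}$ for the limit via Lemma \ref{lem:refinement} and the superadditivity (\ref{eqn:energyinequality}), deduces that almost every sample path has finite energy along the dense set $\Par_\infty$ and is therefore H\"older-$1/2$ there with a random constant, and only then extends continuously (Lemma \ref{lem:extension}); the inequality $\E{d(X(t),X(s))^2}\leq(t-s)\,\len{\gamma,[s,t]}$ established along the way is also what delivers the upper bound $\len{X}\leq\len{\gamma}$ without needing convergence of energies along a subsequence. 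If you replace your tightness claim by the pathwise-energy/Markov argument (or by the paper's product-space route), the rest of your outline goes through, modulo the minor point that $\|x-y\|_B^2$ is only bounded on the bounded set where your interpolated processes live, which is what justifies passing expectations to the limit.
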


The rest of the section proves Theorem \ref{thm:realization}.
To begin, note the following fact. If $\nu, \nu' \in \M(K)$
are two Borel probability measures then there is a coupling  $(V,W)$ of $\nu$ with $\nu'$
such that $\Was{\nu,\nu'} = \E{d(V,W)^2}^{1/2}$. This is because $K$ is compact.
Using this fact we inductively build up optimal couplings by using the following lemma.

\begin{lemma} \label{lem:finiterealization}
Let $\gamma_0, \ldots, \gamma_n \in \M(K)$. There exist jointly distributed $K$--valued
random variables $(X_0, \ldots, X_n)$ such that $X_i \sim \gamma_i$ and
$\E{d(X_{i-1},X_i)^2} = \Was{\gamma_{i-1},\gamma_i}^2$ for $1 \leq i \leq n$.
\end{lemma}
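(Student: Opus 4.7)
The plan is to chain together the pairwise optimal couplings by the standard gluing (disintegration) construction from optimal transport, viewing the joint distribution as a Markov chain with initial law $\gamma_0$ and transition kernels given by the optimal couplings.

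First, I would invoke Lemma \ref{lem:bestcoupling} (stated just above) to obtain, for each $i = 1, \ldots, n$, a Borel probability measure $\pi_i$ on $K \times K$ whose marginals are $\gamma_{i-1}$ and $\gamma_i$ and which satisfies $\int d(x,y)^2 \, d\pi_i(x,y) = \Was{\gamma_{i-1},\gamma_i}^2$. Since $K$ is a compact metric space it is Polish, so regular conditional distributions exist and we can disintegrate each $\pi_i$ against its first marginal: there is a Markov kernel $\kappa_i : K \to \M(K)$, measurable in the first argument, such that
$$d\pi_i(x,y) = d\gamma_{i-1}(x)\, \kappa_i(x, dy).$$

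Next I would build the joint law on $K^{n+1}$ as the Markov chain with initial distribution $\gamma_0$ and successive transition kernels $\kappa_1, \ldots, \kappa_n$. Concretely, for Borel sets $A_0, \ldots, A_n \subseteq K$, define
$$\mathbb{P}\bigl(X_0 \in A_0, \ldots, X_n \in A_n\bigr) = \int_{A_0}\!\cdots\!\int_{A_n} d\gamma_0(x_0)\, \kappa_1(x_0, dx_1)\cdots \kappa_n(x_{n-1}, dx_n),$$
which is a well-defined probability measure by the Ionescu-Tulcea theorem (or just by iterated integration, since we only take finitely many steps).

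Now I would verify the two required properties by induction on $i$. The base case $X_0 \sim \gamma_0$ is built in. For the inductive step, assume $X_{i-1} \sim \gamma_{i-1}$. By the construction, the joint distribution of $(X_{i-1}, X_i)$ is
$$d\gamma_{i-1}(x)\, \kappa_i(x, dy) = d\pi_i(x,y),$$
so $(X_{i-1}, X_i) \sim \pi_i$. In particular $X_i \sim \gamma_i$, which closes the induction, and
$$\E{d(X_{i-1}, X_i)^2} = \int d(x,y)^2\, d\pi_i(x,y) = \Was{\gamma_{i-1},\gamma_i}^2,$$
as desired.

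The only genuinely nontrivial input is the existence of the kernels $\kappa_i$, i.e.\ the disintegration of $\pi_i$. This is standard for Borel probability measures on Polish spaces, and since $K$ is compact metric the hypothesis is automatic; so the main obstacle is really just bookkeeping, and there is no substantive difficulty beyond the gluing idea itself. Note that the conditional independence of the chain plays no role in the conclusion: because the lemma only constrains consecutive pairs, it is enough that $(X_{i-1}, X_i) \sim \pi_i$, and the Markov construction is the simplest way to achieve this jointly for all $i$.
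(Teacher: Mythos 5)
Your proposal is correct and takes essentially the same route as the paper: both glue the pairwise optimal couplings supplied by Lemma \ref{lem:bestcoupling} by disintegrating each coupling over its first marginal and extending the joint law one coordinate at a time, so that $(X_{i-1},X_i)$ has the optimal law. The only cosmetic difference is that the paper encodes the conditional kernel as a measurable function $g(X_{i-1},U)$ of an independent uniform variable rather than writing the Markov kernels explicitly.
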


\begin{proof}
We proceed by induction. The case for two measures is mentioned above (see Lemma \ref{lem:bestcoupling}).
To carry out the induction step we will need the following measure theoretic fact. It is often known as the
Disintegration Theorem (see \cite[Theorem 5.10]{Kallenberg}).

\paragraph{\textbf{Fact:}} Let $(X,Y) \in K^2$ be jointly distributed random variables. There is a
measurable function $g : K \times [0,1] \to K^2$ such that if $U \sim \mathrm{Uniform}[0,1]$
and $U$ is independent of $(X,Y)$ then $(X, g(X,U))$ has the same joint distribution as $(X,Y)$.

Suppose the statement of the lemma holds for $\gamma_0, \ldots, \gamma_{n-1}$ with
jointly distributed random variables $(X_0, \ldots, X_{n-1})$. Using Lemma \ref{lem:bestcoupling},
we find a coupling $(X'_{n-1},X'_n)$ of $\gamma_{n-1}$ with $\gamma_n$ such that
$\Was{\gamma_{n-1},\gamma_n}^2 = \E{d(X'_{n-1},X'_n)^2}$. Let $U \sim \mathrm{Uniform}[0,1]$ be
independent of all the random variables $X_0, \ldots, X_{n-1}, X'_{n-1}$ and $X'_n$. Let $g$ be as
mentioned in the fact above for the pair $(X'_{n-1},X'_n)$.

Since $X_{n-1}$ has the same distribution as $X'_{n-1}$, and $U$ is independent of all other random variables,
the pair $(X_{n-1},g(X_{n-1},U))$ has the same distribution as $(X'_{n-1},X'_n)$. Let $X_n = g(X_{n-1},U)$.
Thus, $\Was{\gamma_{n-1},\gamma_n}^2 = \E{d(X'_{n-1},X'_n)^2} = \E{d(X_{n-1},X_n)^2}$. The random
variables $(X_0, \ldots, X_n)$ provide the desired coupling.
\end{proof}

Now suppose $\gamma$ is a $\M(K)$ valued path. We may choose a sequence of nested
finite partitions $\Par_0 \subset \Par_1 \ldots$ such that $\Delta(\Par_n) \to 0$ and
$\len{\gamma,\Par_n} \nearrow \len{\gamma}$.

For each $n$, we apply Lemma \ref{lem:finiterealization} to find coupled random variables
$(X_n(t); t \in \Par_n)$ such that if $\Par_n = \{ 0 = t_0 < \ldots < t_k = 1\}$ then
$$\E{d(X_n(t_i),X_n(t_{i-1}))^2} = \Was{\gamma(t_i),\gamma(t_{i-1})}^2 \;\;\text{for every}\;\; 1 \leq i \leq k.$$
Fix an $x_0 \in K$. Set $\Par_{\infty} = \cup_n \Par_n$ and extend $X_n$ to $\Par_{\infty}$ by setting
$X_n(t) \equiv x_0$ if $t \in \Par_{\infty} \setminus \Par_n$.

The process $X_n$ takes values in $K^{\Par_{\infty}}$ for every $n$. As $K^{\Par_{\infty}}$ is compact
in the product topology, by applying Prokhorov's Theorem we can find a subsequence $n_i \to \infty$
and a process $(X(t); t \in \Par_{\infty)}$ such that $X_{n_i} \to X$ weakly. As the partitions $\Par_n$
are nested we may assume w.l.o.g.~that $n_i = n$, that is, $X_n \to X$ weakly.

Consider the process $(X(t); t \in \Par_{\infty})$. We must extend $X$ continuously from the dense subset
$\Par_{\infty}$ to $[0,1]$. First, we show that $X$ has finite energy along $\Par_{\infty}$. Let
$$\len{X,\Par_{\infty}} := \lim_{n \to \infty} \len{X, \Par_n},$$
which exists by monotonicity.

\begin{lemma} \label{lem:discreteenergy}
The process $(X(t); t \in \Par_{\infty})$ satisfies $\len{X,\Par_{\infty}} \leq \len{\gamma}$.
Moreover, for every $s < t$ in $\Par_{\infty}$, $\E{d(X(t),X(s))^2} \leq (t-s) \len{\gamma, [s,t]}$.
\end{lemma}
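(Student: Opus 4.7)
The plan is to exploit the construction of the processes $X_n$ together with weak convergence $X_n \to X$ on $K^{\Par_\infty}$. The key structural input is that for each $n$, by Lemma \ref{lem:finiterealization}, the process $(X_n(t); t \in \Par_n)$ couples consecutive marginals optimally in the Wasserstein sense, hence $\len{X_n,\Par_n} = \len{\gamma,\Par_n} \leq \len{\gamma}$. All other steps are about passing this bound to the limit, and the setting is friendly because $K$ is compact, so $d$ (and $d^2$) is bounded and continuous on $K \times K$, and finite-dimensional projections are continuous on $K^{\Par_\infty}$.

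For the first claim, I would fix $m$ and consider any $n \geq m$. Since $\Par_m \subset \Par_n$, Lemma \ref{lem:refinement} applied to the $L^2$-valued path $X_n$ gives $\len{X_n,\Par_m} \leq \len{X_n,\Par_n} = \len{\gamma,\Par_n} \leq \len{\gamma}$. Weak convergence of $X_n \to X$ in $K^{\Par_\infty}$ restricts to weak convergence of the joint law $(X_n(t))_{t \in \Par_m} \to (X(t))_{t \in \Par_m}$, and since each summand $d(X_n(t_i),X_n(t_{i-1}))^2/(t_i - t_{i-1})$ is a bounded continuous function of these finitely many coordinates, I can pass the expectation to the limit to conclude $\len{X,\Par_m} \leq \len{\gamma}$. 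Letting $m \to \infty$ and using monotonicity (Lemma \ref{lem:refinement} for $X$) gives $\len{X,\Par_\infty} \leq \len{\gamma}$.

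For the second claim, fix $s < t$ in $\Par_\infty$ and choose $n$ large enough that both lie in $\Par_n$. Writing $\Par_n \cap [s,t] = \{s = t_0 < t_1 < \cdots < t_k = t\}$, I apply the triangle inequality for $d$ and then Cauchy--Schwarz:
\[
d(X_n(t),X_n(s))^2 \leq \Big(\sum_{i=1}^k d(X_n(t_i),X_n(t_{i-1}))\Big)^2 \leq (t-s)\sum_{i=1}^k \frac{d(X_n(t_i),X_n(t_{i-1}))^2}{t_i - t_{i-1}}.
\]
Taking expectation and using the construction of $X_n$ yields
\[
\E{d(X_n(t),X_n(s))^2} \leq (t-s)\sum_{i=1}^k \frac{\Was{\gamma(t_i),\gamma(t_{i-1})}^2}{t_i - t_{i-1}} \leq (t-s)\len{\gamma,[s,t]}.
\]
Now I take $n \to \infty$: by weak convergence $(X_n(s),X_n(t)) \to (X(s),X(t))$ in $K^2$ and boundedness/continuity of $d^2$, the left-hand side converges to $\E{d(X(t),X(s))^2}$, which gives the desired inequality.

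The only potential obstacle is justifying the limit inside the expectations, but this is routine given that $K$ is compact; no uniform integrability issue arises. Everything reduces to the two facts that finite-dimensional projections are continuous in the product topology on $K^{\Par_\infty}$ and that $d^2$ is bounded continuous, and then the energy bounds are inherited from $X_n$ by standard weak convergence.
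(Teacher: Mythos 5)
Your proposal is correct and follows essentially the same route as the paper: the optimal consecutive couplings give $\E{d(X_n(t_i),X_n(t_{i-1}))^2}=\Was{\gamma(t_i),\gamma(t_{i-1})}^2$, Lemma \ref{lem:refinement} (triangle inequality plus Cauchy--Schwarz) controls coarser partition sums, and weak convergence in $K^{\Par_{\infty}}$ together with boundedness and continuity of $d^2$ transfers the bounds to $X$. The only cosmetic difference is that you obtain the first claim directly from $\len{X_n,\Par_m}\leq\len{X_n,\Par_n}=\len{\gamma,\Par_n}\leq\len{\gamma}$, whereas the paper derives it from the second claim applied to consecutive points of $\Par_n$ together with the superadditivity inequality (\ref{eqn:energyinequality}); both are valid.
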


\begin{proof}
We begin by showing $\E{d(X(t),X(s))^2} \leq (t-s) \len{\gamma, [s,t]}$ for $s < t$ in $\Par_{\infty}$.
Suppose that $s < t$ are both in $\Par_{\infty}$. From weak convergence of the $X_n$ and
compactness of $K$ we have that $\E{d(X(t),X(s))^2} = \lim_{n \to \infty} \E{d(X_n(t),X_n(s))^2}$.
We now bound $\E{d(X_n(t),X_n(s))^2}$. As $s,t \in \Par_{\infty}$, there is an $N$ such that
$s,t \in \Par_n$ for $n \geq N$. Suppose that the points of $\Par_n$ between $s$ and $t$ are
$s = t_{0,n} < t_{1,n} < \ldots < t_{k_n,n} = t$. Using Lemma \ref{lem:refinement} we deduce that for $n \geq N$,
\begin{align*}
\frac{\E{d(X_n(t),X_n(s))^2}}{t-s} &\leq \sum_{i=1}^{k_n} \frac{\E{d(X_n(t_{n,i}),X_n(t_{n,i-1}))^2}}{t_{n,i} - t_{n,i-1}} \\
& = \sum_{i=1}^{k_n} \frac{\Was{\gamma(t_{n,i}),\gamma(t_{n,i-1})}^2}{t_{n,i} - t_{n,i-1}} \\
& \leq \len{\gamma,[s,t]}.
\end{align*}
The last inequality follows due to the $t_{n,i}$ forming a partition of $[s,t]$. By letting $n \to \infty$ we
conclude from the above estimate that $\E{d(X(t),X(s))^2} \leq (t-s) \, \len{\gamma,[s,t]}$.

For the partition $\Par_n = \{ 0 = t_0 < \ldots < t_n =1\}$ we deduce from the inequality above that
\begin{equation} \label{eqn:Xenergybound}
\sum_{i=1}^n \frac{\E{d(X(t_i),X(t_{i-1}))^2}}{t_i - t_{i-1}} \leq \sum_{i=1}^n \len{\gamma,[t_{i-1},t_i]}.
\end{equation}
From the inequality (\ref{eqn:energyinequality}) we now deduce that $\sum_{i=1}^n \len{\gamma,[t_{i-1},t_i]} \leq \len{\gamma}$.
Therefore, $\len{X, \Par_{\infty}} \leq \len{\gamma}$ as required.
\end{proof}

We now show that $X$ has a continuous extension to a process defined for times $t \in [0,1]$.
Let $(\Omega, \Sigma,\mu)$ denote the probability space where $(X(t), t \in \Par_{\infty})$
is jointly defined and let $X_{\omega}(t)$ denote the outcome of $X(t)$ for $\omega \in \Omega$.
The inequality $\len{X, \Par_{\infty}} \leq \len{\gamma}$ from Lemma \ref{lem:discreteenergy}
implies that for $\mu$-almost every $\omega$ the energy of the discrete $K$-valued path
$(X_{\omega}(t), t \in \Par_{\infty})$ is finite.
In particular, for $\mu$-almost every $\omega$ there exists a constant $C_{\omega}$ such that
$$d(X_{\omega}(t),X_{\omega}(s)) \leq C_{\omega}\sqrt{|t-s|} \;\;\text{for}\;\; s,t \in \Par_{\infty}.$$
Since $\Par_{\infty}$ is a dense subset of $[0,1]$, Lemma \ref{lem:extension} from the Appendix
implies that $(X_{\omega}(t); t \in \Par_{\infty})$ has a continuous extension to times $t \in [0,1]$ for $\mu$-almost
every $\omega$. We denote this extension by $X = (X(t), 0\leq t \leq 1)$,
which is then a $K$-valued stochastic process with continuous sample paths.

Now we show that $X$ realizes $\gamma$. Certainly, $X(t) \sim \gamma(t)$ for
$t \in \Par_{\infty}$ because $X_n(t) \to X(t)$ weakly and $X_n(t) \sim \gamma(t)$
for all large $n$ due to the partitions $\Par_n$ being nested. Suppose that $t \in [0,1] \setminus \Par_{\infty}$.
Choose a sequence $t_n \in \Par_n$ such that $t_n \to t$. By continuity of $X$ and the bounded
convergence theorem we conclude that $\E{d(X(t_n),X(t))} \to 0$. This implies that $X(t_n) \to X(t)$ weakly.
The distribution of $X(t_n)$ is $\gamma(t_n)$ and $\gamma(t_n) \to \gamma(t)$ weakly because the path
$\gamma$ is continuous due to having finite energy. Therefore, $X(t) \sim \gamma(t)$ for every $t$.

Finally we show that $\len{X} = \len{\gamma}$. As $X$ realizes $\gamma$,
$\E{d(X(t),X(s))^2} \geq \Was{\gamma(t),\gamma(s)}^2$. Hence, $\len{X} \geq \len{\gamma}$.
To get the reverse inequality first recall from Lemma \ref{lem:discreteenergy} that
$$\E{d(X(t),X(s))^2} \leq (t-s) \, \len{\gamma,[s,t]} \;\; \text{for every}\;\;s,t \in \Par_{\infty}\; \text{with}\; s < t.$$
Suppose $s < t$ are two arbitrary points in $[0,1]$. Choose sequences $\{s_n\}$ and $\{t_n\}$
such that $s_n, t_n \in \Par_n$, $s_n \leq t_n$, $s_n \searrow s$ and $t_n \nearrow t$.
From continuity of $X$ and the bounded convergence theorem we have that
$\E{d(X(t),X(s))^2} = \lim_{n \to \infty} \E{d(X(t_n),X(s_n))^2}$. Since
$$\E{d(X(t_n),X(s_n))^2} \leq (t_n-s_n)\, \len{\gamma,[s_n,t_n]}
\;\;\text{and}\;\; \len{\gamma,[s_n,t_n]} \leq \len{\gamma,[s,t]},$$
we conclude that $\E{d(X(t),X(s))^2} \leq (t-s) \, \len{\gamma,[s,t]}$ for every $s \leq t$.
For an arbitrary partition $\Par = \{ 0 = t_0 < \ldots < t_n =1\}$ we use this inequality
to deduce that $\len{X,\Par} \leq \sum_{i=1}^n \len{\gamma,[t_{i-1},t_i]}$.
The inequality (\ref{eqn:energyinequality}) implies that
$\sum_{i=1}^n \len{\gamma,[t_{i-1},t_i]} \leq \len{\gamma}$.
As $\Par$ was arbitrary it follows that $\len{X} \leq \len{\gamma}$.
This completes the proof.

\section{Minimal energy permuton processes from identity to reverse} \label{sec:sinecurve}

In this section we prove Theorem \ref{thm:sinecurve} and Corollary \ref{speed}.
The proof of Theorem \ref{thm:sinecurve} uses the following lemma about minimal energy paths
on a Hilbert sphere. A proof is provided in the Appendix.

\begin{lemma} \label{lem:Hilbert}
Let $\gamma$ be a path on the unit sphere of a Hilbert space between two
antipodal points $\gamma(0)$ and $-\gamma(0) = \gamma(1)$. Then
$\len{\gamma} \geq \pi^2$ with equality if and only if
$$\gamma(t) =  \cos(\pi t) \gamma(0) + \sin(\pi t) \gamma(1/2).$$
\end{lemma}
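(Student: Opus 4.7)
The plan is to reduce to two classical facts: the shortest curve on the unit sphere between antipodal points is a great-circle arc of length $\pi$, and Cauchy–Schwarz bounds length squared by energy. This mirrors the standard argument that minimal-energy paths on a Riemannian manifold are constant-speed geodesics.

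\medskip

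For any finite partition $\Par = \{0 = t_0 < \cdots < t_n = 1\}$ of $[0,1]$, Cauchy–Schwarz yields
\[
L(\gamma,\Par)^2 \,:=\, \left(\sum_{i=1}^n d(\gamma(t_i),\gamma(t_{i-1}))\right)^{\!2} \;\leq\; \len{\gamma,\Par},
\]
with equality iff the ratios $d(\gamma(t_i),\gamma(t_{i-1}))/(t_i-t_{i-1})$ are independent of $i$. Taking the supremum over partitions gives $L(\gamma)^2 \leq \len{\gamma}$, where $L(\gamma) := \sup_\Par L(\gamma,\Par)$ is the Hilbert-space (chord-sum) length of $\gamma$. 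Next I would show $L(\gamma) \geq \pi$. Inside the two-dimensional subspace spanned by any pair $\gamma(t_{i-1}), \gamma(t_i)$ we have $d(\gamma(t_{i-1}),\gamma(t_i)) = 2\sin(\theta_i/2)$, where $\theta_i \in [0,\pi]$ is the angle between them. Monotonicity of $L(\gamma,\Par)$ under refinement (Hilbert-space triangle inequality), combined with $2\sin(\theta/2)/\theta \to 1$ as $\theta \to 0$, shows that as $\Delta(\Par) \to 0$ the partial sums $L(\gamma,\Par)$ increase to the intrinsic spherical arclength $\sum_i \theta_i$ of $\gamma$. The spherical triangle inequality, applied to the three-dimensional subspace spanned by each triple of consecutive points, then gives $\sum_i \theta_i \geq \pi$ because the endpoints are antipodal. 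Combining, $\len{\gamma} \geq L(\gamma)^2 \geq \pi^2$.

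\medskip

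For the equality case, $\len{\gamma} = \pi^2$ forces $L(\gamma) = \pi$ together with saturation of Cauchy–Schwarz on arbitrarily fine partitions. Length $\pi$ implies $\gamma$ parametrizes a minimizing great-circle arc from $\gamma(0)$ to $-\gamma(0)$, so its image lies in the $2$-plane spanned by $\gamma(0)$ and $\gamma(1/2)$, with $\gamma(1/2)$ a unit vector orthogonal to $\gamma(0)$. Limiting saturation of Cauchy–Schwarz forces $d(\gamma(t),\gamma(s))/|t-s|$ to be constant on a dense set of pairs, hence on all pairs by continuity of $\gamma$. Within the fixed $2$-plane, a constant-speed parametrization of this arc traversing the full half-circle in time $1$ must be $\gamma(t) = \cos(\pi t)\gamma(0) + \sin(\pi t)\gamma(1/2)$, as claimed.

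\medskip

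The main obstacle is the equality analysis: upgrading the two limiting equalities to a pointwise identification of $\gamma$, given that $\gamma$ is only assumed continuous. Both steps reduce to finite-dimensional Euclidean geometry after restricting to the appropriate two- or three-dimensional subspace, but some care is needed to ensure that the partition-limit arguments interact cleanly with mere continuity of $\gamma$ (rather than differentiability).
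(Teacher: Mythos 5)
The inequality half of your argument is correct and runs on the same fuel as the paper's proof: the chord--angle identity $\|a-b\|=2\sin(\theta/2)$, the spherical triangle inequality for angles applied inside a three-dimensional subspace, and Cauchy--Schwarz against a partition of total length $1$. The paper first shows $\len{\gamma}$ equals the angular energy $\sup_{\Par}\sum_i \theta(t_{i-1},t_i)^2/(t_i-t_{i-1})$ (via $x-x^3/6\le\sin x\le x$) and then applies Cauchy--Schwarz to the angles; you instead apply Cauchy--Schwarz to chord sums to get $L(\gamma)^2\le\len{\gamma}$ and then show $L(\gamma)\ge\pi$ by the same chord--angle estimate. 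Either packaging gives $\len{\gamma}\ge\pi^2$.

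The equality case, however, has a genuine gap. You claim that limiting saturation of Cauchy--Schwarz forces $d(\gamma(t),\gamma(s))/|t-s|$ to be constant on a dense set of pairs and hence, by continuity, for all pairs. That conclusion is false: for the minimizer itself $d(\gamma(t),\gamma(s))=2\sin\big(\tfrac{\pi}{2}|t-s|\big)$, so the chordal difference quotient genuinely depends on $|t-s|$; and if it were a constant $c$ for all pairs, then $s=0,t=1$ gives $c=2$, which is incompatible with $L(\gamma)=\pi$. What asymptotic saturation along fine partitions actually controls is the ratios $d_i/(t_i-t_{i-1})$ over the \emph{small} intervals of those partitions, in an averaged sense, and upgrading that to a pointwise statement about arbitrary pairs is exactly the missing work. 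In addition, the step ``$L(\gamma)=\pi$ implies the image is a great semicircle with $\gamma(1/2)$ orthogonal to $\gamma(0)$'' is asserted rather than proved, and the orthogonality of $\gamma(1/2)$ is an output of the parametrization analysis, not an input. The clean repair is the paper's route: because $\len{\gamma}$ equals the angular energy, every partition's angular sum is squeezed between $\pi^2$ (angle triangle inequality plus Cauchy--Schwarz) and $\len{\gamma}=\pi^2$, so Cauchy--Schwarz is saturated \emph{exactly} on every partition, yielding the angular identity $\theta(s,t)=\pi|t-s|$ for all $s,t$; then writing $\gamma(t)=\cos(\pi t)\gamma(0)+\sin(\pi t)\,x(t)$ with $x(t)\perp\gamma(0)$ and computing $\langle\gamma(t),\gamma(1/2)\rangle=\sin(\pi t)$ forces $x(t)=\gamma(1/2)$. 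Your semicircle-plus-reparametrization plan could also be completed, but you would then have to prove the semicircle claim and solve a one-dimensional energy minimization for the angle function, which is not simpler.
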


\begin{proof}[Proof of Theorem \ref{thm:sinecurve}.]
Suppose $X = (X(t); 0 \leq t \leq 1)$ is a permuton process with $X(0) = - X(1)$. Since
$\E{X(t)^2} = 1/3$, the process $X$ is a path between two antipodal points on the sphere
of radius $1/\sqrt{3}$ in the Hilbert space $L^2(\Omega, \Sigma, P)$, where
$(\Omega, \Sigma, P)$ is the probability space over which the process $X$ is defined.
From Lemma \ref{lem:Hilbert} we see that $\len{X} \geq \pi^2/3$ with equality if and
only only if $X(t) = \cos(\pi t) X(0) + \sin(\pi t) X(1/2)$. In case of equality, since
$X(t) \sim \Uni$ for every $t$, this equation for $X$ implies that the projection of $(X(0),X(1/2))$
onto any line through the origin has the $\Uni$ distribution. Thus, $(X(0),X(1/2))$ is distributed
according to the Archimedean measure and $X$ is the Archimedean process.
\end{proof}

\begin{proof}[Proof of Corollary \ref{speed}.]
Theorem \ref{thm:sinecurve} implies that all limit points of $X^n$ are supported on permuton processes
with energy at least $\pi^2/3$. Due to the uniqueness of energy minimizers, $X^n$
will converge to the Archimedean process if the expected energy of any limit point $X$ of $X^n$
is at most $\pi^2/3$.
By bounded convergence theorem and assumption \eqref{eqn:RSNconjecture} we have
$$
\E {(X(t)-X(0))^2} \le \frac{(\pi t)^2}{3}.
$$
The $\eps_n$-stationarity of $X^n$ implies $\eps$-stationarity of $X$ for all $\eps>0$, so
$$
\E {(X(t)-X(s))^2} \le \frac{(\pi (s-t))^2}{3},
$$
and therefore for every partition $\Par = \{t_0 = 0, \ldots, t_k = 1\}$ we have
\[
\sum_{i=1}^k \frac{\E{(X(t_i)-X(t_{i-1}))^2}}{t_i - t_{i-1}} \leq \frac{\pi^2}{3}. \qedhere
\]
\end{proof}

Note that conversely, the Archimedean process limit and the bounded convergence theorem would
imply that for every $t$, $\E{(X^n(t)-X^n(0))^2} \to \frac{2}{3}(1-\cos(\pi t))$.

\section{Minimal energy permuton paths from identity to reverse} \label{sec:ugproof}

In this section, we prove Theorem \ref{thm:uniquegeodesic}.
Using Theorem \ref{thm:realization} we transfer the study of paths in
$\Pspace$ to $[-1,1]^2$--valued stochastic processes.
Then we solve the corresponding energy minimization problem for stochastic processes.
We verify that there is an unique energy minimizer and its fixed time marginals
agree with the Archimedean path.

\paragraph{\textbf{1-dimensional energy minimization}}
In proving Theorem \ref{thm:uniquegeodesic} we will reduce the 2-dimensional
energy minimization problem to a pair of 1-dimensional energy minimization problems.
Here we solve that 1-dimensional problem.

For continuous $f : [0,1] \to \R$, let $\len{f}$ and $\len{f, \Par}$ denote energy
w.r.t.~the Euclidean metric on $\R$.

\begin{lemma} \label{lem:1denergy}
	Let $X = (X(t); 0 \leq t \leq 1)$ be a continuous $\R$-valued stochastic process such that $\E{X(t)} = 0$ and
	$\E{X(t)^2} < \infty$ for every $t$. Set $\sig(t) = \E{X(t)^2}^{1/2}$. Then $\len{X} \geq \len{\sig}$.
	Here the energy of $X$ is w.r.t.~the $L^2$-metric and the energy of $\sig$ is w.r.t.~the Euclidean metric.
	
	Moreover, suppose that $\len{X} = \len{\sig}$, $\len{\sig} < \infty$ and $\sig(t) > 0$ for $t > 0$.
	Then the following holds almost surely,
	$$X(t) = \frac{\sig(t)}{\sig(1)}\,X(1)\;\;\text{for every}\;\; 0 \leq t \leq 1.$$
\end{lemma}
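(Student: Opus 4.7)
The plan is to prove the lower bound $\len{X}\geq\len{\sigma}$ via the reverse triangle inequality in $L^2$, and then to show that equality of total energies propagates to a pointwise Cauchy--Schwarz equality for every pair of times, from which the desired proportionality $X(t)=(\sigma(t)/\sigma(1))X(1)$ will follow.

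For the lower bound, I would apply the reverse triangle inequality to $X(t),X(s)\in L^2$: $\|X(t)-X(s)\|_{L^2}\geq\big|\|X(t)\|_{L^2}-\|X(s)\|_{L^2}\big|=|\sigma(t)-\sigma(s)|$. Squaring and dividing by $t-s$ gives $\len{X,\Par}\geq\len{\sigma,\Par}$ for every partition $\Par$, and taking the supremum yields $\len{X}\geq\len{\sigma}$.

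For the equality case, the main obstacle is to upgrade the single identity $\len{X}=\len{\sigma}$ into the pointwise assertion $\E{(X(t)-X(s))^2}=(\sigma(t)-\sigma(s))^2$ for every pair $s<t$ in $[0,1]$. I would choose a sequence of nested finite partitions $\Par_n$ with $\Delta(\Par_n)\to 0$ and $\len{\sigma,\Par_n}\nearrow\len{\sigma}$. Given an arbitrary pair $s<t$, I adjoin $\{s,t\}$ to every $\Par_n$, which by Lemma~\ref{lem:refinement} only increases both $\len{X,\Par_n}$ and $\len{\sigma,\Par_n}$. The sandwich $\len{\sigma,\Par_n}\leq\len{X,\Par_n}\leq\len{X}=\len{\sigma}$ then forces the non-negative difference $\len{X,\Par_n}-\len{\sigma,\Par_n}$ to tend to $0$. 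Each summand of this difference is non-negative, and the summand corresponding to the subinterval $[s,t]$ is the constant $(\E{(X(t)-X(s))^2}-(\sigma(t)-\sigma(s))^2)/(t-s)$, independent of $n$, so it must vanish. Hence $\|X(t)-X(s)\|_{L^2}=|\sigma(t)-\sigma(s)|$ for every $s,t$.

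To conclude, the equality case of Cauchy--Schwarz in a Hilbert space asserts that $\|u-v\|_{L^2}=\big|\|u\|_{L^2}-\|v\|_{L^2}\big|$ together with $\|v\|_{L^2}>0$ forces $u=(\|u\|_{L^2}/\|v\|_{L^2})\,v$. Taking $u=X(t)$ and $v=X(1)$, with $\|v\|_{L^2}=\sigma(1)>0$, yields $X(t)=(\sigma(t)/\sigma(1))X(1)$ almost surely for each fixed $t$. To upgrade to the claim that this identity holds simultaneously in $t$ on a single event of full probability, I would note that $\len{\sigma}<\infty$ forces $\sigma$ to be $1/2$--Hölder continuous (apply the definition of $\len{\sigma}$ to the partition $\{0,s,t,1\}$), so the process $Y(t):=X(t)-(\sigma(t)/\sigma(1))X(1)$ has continuous sample paths. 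Since $Y$ vanishes almost surely on any countable dense set of times simultaneously, continuity extends this to $Y\equiv 0$ on $[0,1]$ almost surely, completing the proof.
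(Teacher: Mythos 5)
Your lower bound is correct and is essentially the paper's argument (the reverse triangle inequality in $L^2$ is the same computation as Cauchy--Schwarz applied to $\E{X(t)X(s)}$). The equality case, however, has a genuine gap at the step where you extract the pointwise identity. You correctly get that $\len{X,\Par_n\cup\{s,t\}}-\len{\sigma,\Par_n\cup\{s,t\}}\to 0$, but the claim that this difference contains the summand $\big(\E{(X(t)-X(s))^2}-(\sigma(t)-\sigma(s))^2\big)/(t-s)$ ``independent of $n$'' is false: that summand is present only when $s$ and $t$ are \emph{consecutive} points of the partition, and since you insisted $\Delta(\Par_n)\to 0$, for all large $n$ the interval $(s,t)$ contains points of $\Par_n$, so $[s,t]$ is subdivided and the term is gone. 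You cannot recover it from the subdivided sums either, because the refinement inequalities for $X$ and for $\sigma$ both point the same way, so their difference gives no bound on the single-pair gap. Nor can you fix this by keeping $s,t$ adjacent in your partitions: the supremum of $\len{\sigma,\Par}$ over such partitions is $\len{\sigma,[0,s]}+\frac{(\sigma(t)-\sigma(s))^2}{t-s}+\len{\sigma,[t,1]}$, which is strictly below $\len{\sigma}$ unless $\sigma$ is affine on $[s,t]$; the sandwich then only yields $\E{(X(t)-X(s))^2}\le (t-s)\,\len{\sigma,[s,t]}$, which is weaker than the equality you need. This obstruction is real in the intended application, where $\sigma$ is a sine arc, not affine.

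The missing ingredient is the paper's normalization trick. After reducing to the case $\delta:=\inf_t\sigma(t)>0$ (prove constancy on $[\eps,1]$ for each $\eps>0$ and use continuity of $X$), set $Y(t)=X(t)/\sigma(t)$ and use the exact identity $\E{(Y(t)-Y(s))^2}=\big(\E{(X(t)-X(s))^2}-(\sigma(t)-\sigma(s))^2\big)/(\sigma(s)\sigma(t))$. This bounds $\len{Y,\Par}\le\delta^{-2}\big(\len{X}-\len{\sigma,\Par}\big)$ for \emph{every} partition, so along nested partitions with $\len{\sigma,\Par_n}\nearrow\len{\sigma}$ one gets $\len{Y,\Par_n}\to 0$, and monotonicity under refinement (Lemma \ref{lem:refinement}) then forces $\len{Y,\Par_n}=0$ for all $n$; hence $\E{(Y(t)-Y(s))^2}\le |t-s|\sup_n\len{Y,\Par_n}=0$ for all $s,t$ in the dense set $\cup_n\Par_n$, and continuity upgrades this to $Y$ being a.s.\ constant on $[0,1]$. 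Your closing steps (equality in Cauchy--Schwarz, and the countable-dense-set-plus-continuity upgrade using that $\len{\sigma}<\infty$ makes $\sigma$ $1/2$-H\"older) are fine, but they sit on top of the pointwise $L^2$ identity that your argument does not actually establish.
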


\begin{proof}
	We have that $\E{|X(t)-X(s)|^2} = \sig(t)^2 - 2\E{X(t)X(s)} + \sig(s)^2$. The Cauchy-Schwarz inequality implies
	$\E{X(t)X(s)} \leq \sig(t) \sig(s)$, and hence, $\E{|X(t)-X(s)|^2} \geq (\sig(t)-\sig(s))^2$.
	From this inequality it is immediate that $\len{X} \geq \len{\sig}$.
	
	Now suppose that $\len{X} = \len{\sig}$, $\len{\sig}$ is finite and $\sig(t) > 0$ for every $t > 0$.
	If we show that $X(t)/\sig(t)$ is almost surely constant on the interval $[\eps, 1]$, for any $ \eps > 0$,
	then the continuity of $X$ implies that $X(t)/\sig(t)$ is almost surely constant on $[0,1]$.
	Therefore, we may assume that $\sig(t) > 0$ for $t \in [0,1]$.
	
	Set $\delta = \inf_{t \in [0,1]}\, \sig(t)$. Then $\delta > 0$ since $\sig(t)$ is continuous and positive on $[0,1]$.
	Set $Y(t) = \frac{X(t)}{\sig(t)}$. For $0 \leq s \leq t \leq 1$,
	\begin{align*}
		\E{|Y(t)-Y(s)|^2} &= \frac{\E{|X(t)-X(s)|^2} - |\sig(t)-\sig(s)|^2}{\sig(s)\sig(t)} \\
		& \leq \frac{\E{|X(t)-X(s)|^2} - |\sig(t)-\sig(s)|^2}{\delta^2}.
	\end{align*}
	The estimate above implies that for any finite partition $\Par$ of $[0,1]$,
	$$\len{Y,\Par} \leq \delta^{-2} \big(\len{X, \Par} - \len{\sig,\Par} \big) \leq \delta^{-2} \big (\len{X} - \len{\sig,\Par} \big).$$
	Choose a sequence of nested partitions $\Par_0 \subset \Par_1 \subset \cdots$ such that
	$\len{\sig, \Par_n} \to \len{\sig}$.
	We deduce from the above that $\len{Y,\Par_n} \to 0$ due to $\len{X} = \len{\sig}$.
	Since $\len{Y,\Par_n}$ is monotone increasing we conclude that $\len{Y,\Par_n} = 0$ for every $n$.
	Set $\Par = \cup_n \Par_n$. Then for every $s,t \in \Par$,
	$$\E{|Y(t)-Y(s)|^2} \leq |t-s| \cdot \left (\sup_n \,\len{Y,\Par_n} \right) = 0.$$
	
	Fix an arbitrary $p \in \Par$. We deduce from the above that for every $q \in \Par$, $\pr{Y(q) = Y(p)} = 1$.
	Taking the countable intersection of these events over all $q \in \Par$ we conclude that
	$$\pr{Y(q) = Y(p) \;\text{for every}\; q \in \Par} = 1.$$
	The continuity of $Y$ and the fact that $\Par$ is a dense subset of $[0,1]$ imply that almost surely,
	$Y(t) \equiv Y(p)$ for every $t \in [0,1]$.
	In other words, $X(t) = \frac{\sig(t)}{\sig(1)}\, X(1)$ for every $t$, almost surely, as required.
\end{proof}

\paragraph{\textbf{Proof of Theorem \ref{thm:uniquegeodesic}}}

Let $\perm = (\perm(t); 0 \leq t \leq 1)$ be a path in $\Pspace$ from $\id$ to $\rev$ such that
$\len{\perm} < \infty$ in the Wasserstein metric. Using Theorem \ref{thm:realization} we may realize
$\perm$ as a $[-1,1]^2$-valued continuous stochastic process $X$ such that $\len{\perm} = \len{X}$.

Write $X(t) = (x(t),y(t))$. Then $x(t)$ and $y(t)$ are distributed as $\mathrm{Uniform}[-1,1]$
since $\perm(t)$ is a permuton. Also, $x(0) = y(0)$ and $x(1) = -y(1)$ due to $\perm(0) = \id$
and $\perm(1) = \rev$. Set
\begin{align*}
	u(t) &= \frac{x(t) - y(t)}{\sqrt{2}},\\
	v(t) & = \frac{x(t) + y(t)}{\sqrt{2}}.
\end{align*}

Then $\E{u(t)} = \E{v(t)} = 0$ for every $t$. For the boundary conditions we have
$u(0) = 0$ and $u(1) = \sqrt{2}x(1)$, while $v(0) = \sqrt{2}x(0)$ and $v(1) = 0$.
Set $$\sig^2_u(t) = \E{u(t)^2} \;\;\text{and}\;\; \sig^2_v(t) = \E{v(t)^2}.$$
Since  $u(t)^2 + v(t)^2 = x(t)^2 + y(t)^2$, we see that $\sig^2_u(t)+ \sig^2_v(t) = \E{x(t)^2 + y(t)^2} = 2/3$
due to $x(t)$ and $y(t)$ being distributed as $\mathrm{Uniform}[-1,1]$.

The map $t \to (\sig_u(t), \sig_v(t))$ is a path on the circle of radius $\sqrt{2/3}$
that begins at $(0,\sqrt{2/3})$ and ends at $(\sqrt{2/3},0)$.
It is well known that there is a unique path of minimal energy on such a circle
from $(0,\sqrt{2/3})$ to $(\sqrt{2/3},0)$. This is the
minor arc going from $(0,\sqrt{2/3})$ to $(\sqrt{2/3},0)$, and uniquely
parametrized by $t \to \sqrt{2/3}\,(\sin(\frac{\pi}{2}t), \cos(\frac{\pi}{2}t))$.
The energy of this path is
$$\frac{\pi^2}{6}  \int_0^1 \cos \left (\frac{\pi}{2}t \right )^2 +\sin \left (\frac{\pi}{2}t \right )^2 \;dt =  \frac{\pi^2}{6}.$$
Consequently, $\len{(\sig_u,\sig_v),\Par}^2 \geq \pi^2/6$. In case of equality
we must have
\begin{equation} \label{eqn:equalitysd}
\sig_u(t) = \sqrt{2/3} \, \sin \big(\frac{\pi}{2}t \big) \;\;\text{and}\;\;
\sig_v(t) = \sqrt{2/3} \, \cos \big(\frac{\pi}{2}t \big).
\end{equation}

Note that $\E{|X(t)-X(s)|^2} = |u(t)-u(s)|^2 + |v(t)-v(s)|^2$, where the distance for
$X$ is in the Euclidean metric of $\R^2$. This implies that $\len{X} = \len{u} + \len{v}$.
Lemma \ref{lem:1denergy} then implies $\len{u} + \len{v} \geq \len{\sig_u} + \len{\sig_v}$.
Therefore,
$$\len{\gamma} = \len{X} = \len{u} + \len{v} \geq \len{\sig_u} + \len{\sig_v}.$$
However, $\len{\sig_u} + \len{\sig_v} = \len{(\sig_u,\sig_v)} \geq \pi^2/6$.

We have deduced that $\len{\gamma} \geq \pi^2/6$ for any path $\gamma$ in $\Pspace$.
As $\len{\Abf} = \pi^2/6$, we deduce that $(\Abf(t); 0 \leq t \leq 1)$ has minimal energy
among all paths from $\id$ to $\rev$ in $\Pspace$. If $\len{\gamma} = \pi^2/6$ then the
functions $\sig_u$ and $\sig_v$ must equal the functions from (\ref{eqn:equalitysd}).
In this case we may apply the case of equality from Lemma \ref{lem:1denergy}
to the processes $u(t)$ and $v(t)$ to conclude that $u(t) = \sqrt{2} \sin(\frac{\pi}{2}t) \, x(1)$
and $v(t) = \sqrt{2} \cos(\frac{\pi}{2}t) \, x(0)$ for every $t$, almost surely. In terms of $X$
we get that almost surely, for all $0 \leq t \leq 1$,
\begin{align}
	x(t) &= \cos \left (\frac{\pi}{2}t \right ) x(0) + \sin \left (\frac{\pi}{2}t \right ) x(1) \label{eqn:arch} \\
	y(t) &= \cos \left (\frac{\pi}{2}t \right ) x(0) - \sin \left (\frac{\pi}{2}t \right ) x(1) \nonumber.
\end{align}

We claim that (\ref{eqn:arch}) implies $(x(0),x(1))$ is distributed as the Archimedean measure.
If this holds then we have $\gamma(t) = \Abf(t)$ because $X(t) \sim \gamma(t)$ and
(\ref{eqn:arch}) implies that $X(t) \sim \Abf(t)$.  The latter holds because the formula
above implies the density function of $X(t)$ agrees with that of $\Abf(t)$ if $(x(0), x(1))$
is distributed as the Archimedean measure.

To see that $(x(0),x(1))$ is distributed as the Archimedean measure
observe that $x(t)$ is the projection of $(x(0),x(1))$ onto the line through the origin
with angle $\frac{\pi}{2}t$. Also, $y(t)$ is the projection of $(x(0),x(1))$ on the line
through the origin with angle $-\frac{\pi}{2}t$. As $x(t)$ and $y(t)$ are distributed
as $\Uni$ for every $t$, it follows that the distribution of the projection
of $(x(0),x(1))$ onto any line through the origin is $\Uni$.
This property determines the Archimedean measure.
This completes the proof of Theorem \ref{thm:uniquegeodesic}.

\section{Permutons in Wasserstein metric} \label{sec:permutongeometry}

In the final section of the paper we establish a formula for the Wasserstein distance
from the identity to any permuton and use it to prove Theorem \ref{thm:distancesquared}.

\begin{lemma} \label{lem:permW2}
Let $\sigma$ and $\tau$ be permutations in $\Sn$. Then,
\[\Was{\mu_{\sigma}, \mu_{\tau}} ^2= \frac{4}{n^3}\, \inf_{\pi \in \Sn} \sum_i (i - \pi(i))^2 + (\sigma(i) -\tau(\pi(i)))^2\,.\]
\end{lemma}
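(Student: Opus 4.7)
The plan is to reduce the continuous Wasserstein infimum over couplings to a finite combinatorial minimum via the Birkhoff--von Neumann theorem. First I would observe that both $\mu_\sigma$ and $\mu_\tau$ are supported on $n$ distinct atoms of equal mass $1/n$ (distinctness follows from the first coordinates $2i/n - 1$ being distinct). Consequently, any coupling $\gamma$ of $\mu_\sigma$ with $\mu_\tau$ is supported on the $n^2$ pairs of atoms and can be written
$$\gamma = \sum_{i,j} \gamma_{ij}\, \delta_{\left(\left(\frac{2i}{n}-1,\frac{2\sigma(i)}{n}-1\right),\,\left(\frac{2j}{n}-1,\frac{2\tau(j)}{n}-1\right)\right)}.$$
The marginal conditions $\sum_j \gamma_{ij} = \sum_i \gamma_{ij} = 1/n$ force $M := (n\gamma_{ij})$ to be a doubly stochastic $n\times n$ matrix.

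Next I would compute the squared Euclidean cost of a generic pair of atoms,
$$c_{ij} \;=\; \left(\tfrac{2i}{n} - \tfrac{2j}{n}\right)^2 + \left(\tfrac{2\sigma(i)}{n} - \tfrac{2\tau(j)}{n}\right)^2 \;=\; \tfrac{4}{n^2}\bigl[(i-j)^2 + (\sigma(i)-\tau(j))^2\bigr],$$
so the expected cost under $\gamma$ is
$$\sum_{i,j} \gamma_{ij}\, c_{ij} \;=\; \frac{4}{n^3} \sum_{i,j} M_{ij}\bigl[(i-j)^2 + (\sigma(i)-\tau(j))^2\bigr].$$
This is a linear functional of $M$ on the Birkhoff polytope of doubly stochastic matrices. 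Since a linear functional on a convex polytope attains its infimum at an extreme point, and by Birkhoff--von Neumann the extreme points of the Birkhoff polytope are precisely the permutation matrices, the infimum is attained at some permutation $\pi \in \mathcal{S}_n$, i.e.\ $M_{ij} = \mathbf{1}[j=\pi(i)]$.

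Substituting this extremal $M$ into the cost expression yields
$$\Was{\mu_\sigma, \mu_\tau}^2 \;=\; \frac{4}{n^3} \inf_{\pi \in \mathcal{S}_n} \sum_i \bigl[(i-\pi(i))^2 + (\sigma(i)-\tau(\pi(i)))^2\bigr],$$
which is the claimed formula. I do not expect a serious obstacle: the only point requiring mild care is the identification of couplings between $n$-atom uniform measures with scaled doubly stochastic matrices, which is immediate once one notes that both marginals are purely atomic with distinct support points.
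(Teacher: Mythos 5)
Your proposal is correct and follows essentially the same route as the paper's proof: identify couplings of the two atomic measures with doubly stochastic matrices (scaled by $n$), note the cost is linear in the matrix, and invoke the Birkhoff--von Neumann theorem to restrict the optimization to permutation matrices. The only cosmetic difference is that the paper expands the quadratic cost before applying linearity, which changes nothing of substance.
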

The proof is in the Appendix.

\begin{theorem} \label{lem:identitydistance}
The Wasserstein distance of the identity permuton $\id$ from any permuton $\perm = (X,Y)$ is as follows.
Let $(X',Y')$ denote an independent copy of $(X,Y)$. Then,
\[\Was{\id,\perm}^2 = \frac{4}{3} - 2\E{\max \, \{ X+Y, X'+Y' \}}\,.\]
\end{theorem}

\begin{proof}
We first derive the analogue of the above formula for permutations and then take limits
to get the final result. For a permutation $\sigma \in \Sn$, consider
$$ \sum_i (i - \pi(i))^2 + (i -\sigma \pi(i))^2 =  4 \sum_i i^2 -2\sum_i i \cdot (\pi(i) + \sigma \pi(i)).$$
Reindexing the latter sum by setting $i := \pi^{-1}(i)$ and replacing $\pi$ by $\pi^{-1}$ we get that
\[\inf_{\pi \in \Sn} \sum_i (i - \pi(i))^2 + (i -\sigma \pi(i))^2 = 4 \sum_i i^2 - 2\sup_{\pi} \sum_i \pi(i)(i + \sigma(i))\,.\]

The sum $\sum_i \pi(i) \cdot (i + \sigma(i))$ is maximized by choosing $\pi(i)$ to be the rank of $i+\sigma(i)$
in the sequence $1+\sigma(1), \ldots, n + \sigma(n)$.
We can write the maximizing permutation $\pi$ as $\pi(i) = \sum_j \ind{j + \sigma(j) \leq i + \sigma(i)}$, whence,
\begin{align*} \sup_{\pi} \sum_i \pi(i) \cdot (i + \sigma(i)) &= \sum_i \sum_j \ind{j + \sigma(j) \leq i + \sigma(i)} (i + \sigma(i))\\
&= \frac{1}{2} \sum_{i,j} \max \, \{i + \sigma(i), j + \sigma(j)\}\,. \end{align*}
Therefore,
\begin{equation} \label{eqn:wassidentity}
\inf_{\pi \in \Sn} \sum_i (i - \pi(i))^2 + (i -\sigma \pi(i))^2 = 4 \sum_i i^2 - \sum_{i,j} \max \, \{i + \sigma(i), j + \sigma(j)\}.
\end{equation}

Let $\mu_{\sig}$ be the empirical distribution associated to $\sigma$. If $(X,Y)$ and $(X',Y')$ are
two independent random variables with distribution $\mu_{\sig}$ then from (\ref{eqn:wassidentity}) and elementary simplifications we get
\begin{align*} \E{\max \, \{X+Y,X'+Y'\}} &= \frac{2}{n^2} \sum_{i,j} \max \left \{\frac{i + \sigma(i)}{n}-1, \frac{j + \sigma(j)}{n}-1\right \}\\
&= \frac{2}{n^3} \big [4 \sum_i i^2 - \inf_{\pi} \sum_i (i - \pi(i))^2 + (i -\sigma(\pi(i)))^2 \big ] - 2\\
&= \frac{2}{n^3} \big [4 \sum_i i^2 - \frac{n^3}{4}\,\Was{\mu^{\mathrm{id}^n},\mu^{\sigma}}^2 \big ] - 2,
\end{align*}
where the last equality follows from Lemma \ref{lem:permW2}. Since $\sum_i i^2 = n^3/3 + O(n^2)$,
the above implies
$$\Was{\mu^{\mathrm{id}^n},\mu^{\sigma}}^2 = \frac{4}{3} - 2 \E{\max \, \{X+Y,X'+Y'\}} + O(1/n).$$

There exits permutations $\sigma^n \in \Sn$ such that $\mu^{\sig^n}$ converges to $\perm$
in the Wasserstein distance by \cite[Theorem 1.6]{permuton} and Lemma \ref{lem:Wconvergence}.
Therefore, $\Was{\id,\perm}^2 = \lim_n \E{\Was{\mu^{\mathrm{id}^n},\mu^{\sig^n}}^2}$.
The formula for $\Was{\id,\perm}^2$ follows from this convergence upon taking the large $n$ limit
of the formula above.
\end{proof}

Observe that for a permuton $\perm = (X,Y)$ we have $\Was{(X,Y),\rev} = \Was{(X,-Y),\id}$.
From Theorem \ref{lem:identitydistance} we conclude that for any permuton $\perm = (X,Y)$,
\begin{equation} \label{eqn:sumdistancesquared}
 \Was{\id,\perm}^2 + \Was{\perm,\rev}^2 = \frac{8}{3} - 2\,\E{\max \, \{X+Y, X'+Y'\} + \max \, \{X-Y, X'-Y'\}}\,.
\end{equation}

\begin{proof}[Proof of Theorem \ref{thm:distancesquared}.]
Given $\perm = (X,Y)$, let $W = \frac{X-Y}{\sqrt{2}}$, $V = \frac{X+Y}{\sqrt{2}}$. Define $(W',V')$
analogously for the pair $(X',Y')$. From (\ref{eqn:sumdistancesquared}) we have
$$\Was{\id,\perm}^2 + \Was{\perm,\rev}^2 = \frac{8}{3} - 2\sqrt{2}\,\E{\max \, \{W,W'\} + \max \, \{V,V'\}}.$$

Suppose $Z$ is an integrable random variable and $Z'$ is an independent copy of $Z$. Then
$$\E{\max \{Z,Z'\}} = 2\E{Z \, \mathbb{P}_{Z'}[Z' < Z]} + \E{Z \, \mathbb{P}_{Z'}[Z'=Z]}.$$
Define $F(z,u) = \pr{Z < z} + u\pr{Z=z}$ for $z \in \R$ and $0 < u < 1$. The function $F$
is called the ``distributional transform" of $Z$. If $U$ is independent of
$Z$ and distributed as $\uni{0,1}$ then $F(Z,U)$ is distributed as $\uni{0,1}$ as well \cite[Proposition 2.1]{R}.
From the definition of $F(z,u)$ we obtain
$$\E{Z \,\pr{Z' < Z}} =\E{Z F(Z,U)} - \frac{1}{2} \E{Z \,\mathbb{P}_{Z'}[Z=Z']}.$$
In particular, $\E{\max\{Z,Z'\}} = 2 \E{Z F(Z,U)}$. Hence,
$$\E{\max \, \{W,W'\} + \max \, \{V,V'\}} = 2 \E{WF(W,U) + VG(V,U)},$$
where $F$ and $G$ are the distributional transforms of $W$ and $V$.

Set $\hat{F}(x,u) = 2F(x,u) -1$ and $\hat{G}(x,u) = 2G(x,u) -1$. Observe that
$$WF(W,U) +VG(V,U) = \frac{W\hat{F}(W,U)}{2} + \frac{W}{2} + \frac{V\hat{G,U}(V)}{2} + \frac{V}{2}.$$
We take expectations of this equation and use that $\E{W} = \E{V} = 0$. Then in order to bound
the expectation of the r.h.s.~we use the inequality $ab \leq \frac{a^2 + b^2}{2}$.
This gives that $W\hat{F}(W,U) \leq (W^2 + \hat{F}(W,U)^2)/2$ and $V\hat{G}(V,U) \leq (V^2 + \hat{G}(V,U)^2)/2$.
Since $\E{W^2 + V^2} = \E{X^2 + Y^2} = 2/3$, we conclude that
$$\E{WF(W,U) +VG(V,U)} \leq \frac{1}{4} \, \E{W^2 + \hat{F}(W,U)^2 + V^2 + \hat{G}(V,U)^2} = \frac{1}{3}.$$
Furthermore, there is equality if and only if $W = \hat{F}(W,U)$ and $V = \hat{G}(V,U)$,
which is equivalent to $(W,V)$ being a permuton. As a result,
$$\Was{\id,\perm}^2 + \Was{\perm,\rev}^2 \geq \frac{8- 4\sqrt{2}}{3},$$
with equality if and only if $\big ( \frac{X-Y}{\sqrt{2}}, \frac{X+Y}{\sqrt{2}} \big )$ is a permuton.
\end{proof}

\section*{Acknowledgements}
M.~Rahman was partially supported by an NSERC PDF award.
B.~Vir\'{a}g was supported by the Canada Research Chair program,
the NSERC Discovery Accelerator grant, the MTA Momentum Random Spectra research group,
and the ERC consolidator grant 648017 (Ab\'{e}rt). M.~Vizer was supported by NKFIH
under the grant SNN 116095.

\section{Appendix} \label{sec:appendix}

\paragraph{\textbf{Proof of Lemma \ref{lem:processfacts}}}

\begin{proof}
Note that $m^{\delta}(Y)$ is non increasing in $\delta$ and converges to zero as $\delta \to 0$ almost surely
since $t \to Y(t)$ is uniformly continuous. Also, $m^{\delta}(Y) \leq 2$, and thus, $\E{m^{\delta}(Y)} \to 0$
as $\delta \to 0$ by the bounded convergence Theorem. For the second claim observe that
$\big | |Y(t)-Y(s)| - |\hat{Y}(t)-\hat{Y}(s)| \big | \leq 2||Y-\hat{Y}||_{\rm{\infty}}$ by the triangle inequality.
Therefore, $m^{\delta}(Y) \leq m^{\delta}(\hat{Y}) +  2||Y-\hat{Y}||_{\rm{\infty}}$ and
vice-versa, which implies the claim. Finally, for the third claim notice that
$$| \mathrm{Lin}(n,Y)(t) - Y(t)| = \sum_{i=1}^{n}
| Y(\frac{i}{n})-Y(t) + n(Y(\frac{i-1}{n})-Y(\frac{i}{n}))(t-\frac{i}{n})| \mathbf{1}_{[\frac{i-1}{n},\frac{i}{n}]}(t) \leq 2m^{1/n}(Y).$$
The claim now follows from the assertion of the first claim.
\end{proof}

\paragraph{\textbf{Proof of Lemma \ref{lem:refinement}}}
\begin{proof}
Suppose that $\Par = \{ 0 = t_0 < t_1 \ldots < t_n = 1\}$.
As $\Par'$ is a refinement of $\Par$ it contains points between the $t_i$. Suppose the points of
$\Par'$ are indexed as $t_{i,j}$ for $0 \leq i \leq n$ and $0 \leq j \leq k_i$ such
that $t_i = t_{i,0} < t_{i,1} < \ldots < t_{i,k_i} = t_{i+1,0} = t_{i+1}$. From the triangle inequality,
$d(\gamma(t_i),\gamma(t_{i-1})) \leq \sum_{j=1}^{k_{i-1}} d(\gamma(t_{i-1,j}),\gamma(t_{i-1,j-1}))$.
So we deduce from the Cauchy-Schwarz inequality that
\begin{align*}
d(\gamma(t_i),\gamma(t_{i-1})) &\leq \sum_{j=1}^{k_{i-1}} \sqrt{t_{i-1,j} - t_{i-1,j-1}}\quad
\frac{d(\gamma(t_{i-1,j}),\gamma(t_{i-1,j-1}))}{\sqrt{t_{i-1,j} - t_{i-1,j-1}}}\\
&\leq \sqrt{t_i - t_{i-1}}  \; \left [ \sum_{j=1}^{k_{i-1}} \frac{d(\gamma(t_{i-1,j}),\gamma(t_{i-1,j-1}))^2}{t_{i-1,j} - t_{i-1,j-1}} \right ]^{1/2}.
\end{align*}

We conclude that
$$\frac{d(\gamma(t_i),\gamma(t_{i-1}))^2}{t_i - t_{i-1}} \leq
\sum_{j=1}^{k_{i-1}} \frac{d(\gamma(t_{i-1,j}),\gamma(t_{i-1,j-1}))^2}{t_{i-1,j} - t_{i-1,j-1}}.$$
Summing the inequality above over $i$ implies that $\len{\gamma,\Par} \leq \len{\gamma, \Par'}$.
\end{proof}

\paragraph{\textbf{Proof of Lemma \ref{lem:Hilbert}}}
\begin{proof}
Suppose that $\gamma$ is a path on the unit sphere of a Hilbert space
$H$ from the vector $\gamma(0)$ to its antipode $-\gamma(0)$. For unit vectors $a$ and $b$ it is easily seen
that $||a-b||^2 = 4\sin(\theta/2)^2$ where $\theta = \arccos(\langle a,b \rangle)$ is the unique angle
between $a$ and $b$ in the interval $[0,\pi]$. Let $\theta(s,t)$ denote the angle between
$\gamma(s)$ and $\gamma(t)$. Using the inequality $x-(x^3/6) \leq \sin(x) \leq x$ for $0 \leq x \leq \pi$,
we see that for any partition $\Par = \{0 = t_0 < \ldots < t_n = 1\}$,
$$(1-\delta(\Par))^2 \, \sum_{i=1}^n \frac{\theta(t_{i-1},t_i)^2}{t_{i-1}-t_i} \leq
 \len{\gamma,\Par} \leq \sum_{i=1}^n \frac{\theta(t_{i-1},t_i)^2}{t_{i-1}-t_i},$$
where $\delta(\Par) = \max_i \{\theta(t_{i-1},t_i)^2\}/24$. Note that $\theta(s,t)$ is continuous as
$\gamma$ is continuous, and in particular, $\delta(\Par) \to 0$ as the mesh size $\Delta(\Par) \to 0$.
This implies that $\len{\gamma} = \sup_{\Par} \sum_i \frac{\theta(t_{i-1},t_i)^2}{t_{i-1}-t_i}$.

If $a,b$ and $c$ are unit vectors in $H$ then the corresponding angles between them satisfy
$\theta(a,b) + \theta(b,c) \geq \theta(a,c)$. This elementary fact can be deduced by considering unit
vectors in $\R^3$ since $a,b$ and $c$ lie is a 3-dimensional subspace.
In particular, $$\sum_i \theta(t_{i-1},t_i) \geq \theta(0,1) = \pi$$
because $\gamma(0)$ and $\gamma(1)$ are antipodal. From the Cauchy-Schwarz inequality we conclude
that $$\pi^2 \leq \sum_i (\sqrt{t_{i-1}-t_i})^2 \cdot \sum_i \frac{\theta(t_{i-1},t_i)^2}{(\sqrt{t_{i-1}-t_i})^2} =
\sum_i \frac{\theta(t_{i-1},t_i)^2}{t_{i-1}-t_i},$$
with equality only if $\theta(t_{i-1},t_i) = \pi(t_{i-1}-t_i)$. We may now conclude that $\len{\gamma} \geq \pi^2$,
and moreover, if there is equality then $\theta(s,t) = \pi |s-t|$.

Now suppose that $\len{\gamma} = \pi^2$. We show that
$\gamma(t) =  \cos(\pi t) \gamma(0) + \sin(\pi t) \gamma(1/2)$.
From the fact that $\theta(s,t) = \pi |t-s|$ we see that $\theta(0,t) = \pi t$ and we may write
$$\gamma(t) = \cos(\pi t) \, \gamma(0) + \sin(\pi t) \, x(t),$$ where $x(t)$ is a unit vector that is
orthogonal to $\gamma(0)$. Noting that $x(1/2) = \gamma(1/2)$ we have that
$\langle \gamma(t),\gamma(1/2) \rangle = \sin(\pi t) \,\langle x(t),x(1/2) \rangle$.
Now $\langle \gamma(t),\gamma(1/2) \rangle \,= \cos(\theta(t,1/2))$, which equals
$\sin(\pi t)$ because $\theta(t,1/2) = \pi |t-\frac{1}{2}|$. Therefore, $\langle x(t),x(1/2) \rangle \,= 1$
for $0 < t < 1$, which implies that $x(t) = x(1/2)$ as both of these are unit vectors.
Consequently, $\gamma(t) = \cos(\pi t) \gamma(0) + \sin(\pi t) \gamma(1/2)$.
\end{proof}

\paragraph{\textbf{Proof of Lemma \ref{lem:permW2}}}

\begin{proof}
Couplings between $\mu_{\sigma}$ and $\mu_{\tau}$ are supported on the points
$ \left (\frac{2i}{n} -1, \frac{2\sigma(i)}{n} -1, \frac{2j}{n} -1, \frac{2 \tau(j)}{n} -1 \right )$ for $1 \leq i,j \leq n$.
Thus, a coupling $(V,W)$ between $\mu_{\sigma}$ and $\mu_{\tau}$ is always
described by the array of numbers $[\alpha_{i,j}]_{1 \leq i,j \leq n}$ such that
\begin{equation} \label{eqn:permW2a}
 \alpha_{i,j} = \pr{V = \left (\frac{2i}{n} -1, \frac{2\sigma(i)}{n} -1 \right ), W = \left (\frac{2j}{n} -1, \frac{2\tau(j)}{n} -1 \right )}.
 \end{equation}

The constraints $V \sim \mu_{\sigma}$ and $W \sim \mu_{\tau}$ is equivalent to
the matrix $M = [n \alpha_{i,j}]$ being doubly stochastic. Denoting $M = [m_{i,j}]$, we get that
\begin{equation} \label{eqn:permW2b}
\E{||V-W||^2} = \frac{4}{n^3} \sum_{i,j} m_{i,j} [(i-j)^2 + (\sigma(i)-\tau(j))^2].
\end{equation}
Let $\mathcal{B}_n$ be the set of all $n \times n$ doubly stochastic matrices.
The map taking $M = [m_{ij}] \in \mathcal{B}_n$ to the r.h.s.~of \eqref{eqn:permW2b}
is linear, and hence minimized at one of the extreme points of the convex set $\mathcal{B}_n$.
These are the permutation matrices $P_{\pi}$ for $\pi \in \Sn$. For a permutation matrix $P_{\pi}$,
we have that
$\sum_{i,j} m_{i,j} [(i-j)^2 + (\sigma(i)-\tau(j))^2] = \sum_{i} (i-\pi(i))^2 + (\sigma(i)-\tau(\pi(i)))^2$.
As a result, we conclude from (\ref{eqn:permW2b}) that
$$\Was{\mu_{\sigma}, \mu_{\tau}}^2 = \frac{4}{n^3} \left [ \inf_{\pi \in \Sn} \, \sum_{i} (i-\pi(i))^2 + (\sigma(i)-\tau(\pi(i)))^2 \right ].$$
\end{proof}

The proofs of the following lemmas use standard arguments. We omit them for brevity.

\begin{lemma} \label{lem:Wconvergence}
Let $(K,d)$ be a compact metric space.
Let $\nu_n$ be a sequence of Borel probability measures on $K$.
Then $\nu_n$ converges weakly to a measure $\nu$ if and only if $\Was{\nu_n,\nu} \to 0$.
\end{lemma}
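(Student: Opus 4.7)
}

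The plan is to prove both directions separately, exploiting compactness of $K$ (in particular, its finite diameter $D = \mathrm{diam}(K) < \infty$, so the Wasserstein distance is uniformly bounded by $D$).

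For the implication $\Was{\nu_n,\nu} \to 0 \Rightarrow \nu_n \to \nu$ weakly, I would argue via Lipschitz test functions. For each $n$, apply Lemma \ref{lem:bestcoupling} (from the Appendix, used earlier in the paper) to produce a coupling $(V_n, W)$ of $\nu_n$ with $\nu$ attaining $\E{d(V_n,W)^2} = \Was{\nu_n,\nu}^2$. For any Lipschitz $f : K \to \R$ with constant $L$, Cauchy--Schwarz yields
$$\left| \int f \, d\nu_n - \int f \, d\nu \right| = \left|\E{f(V_n) - f(W)}\right| \leq L \,\E{d(V_n, W)} \leq L \,\Was{\nu_n,\nu} \to 0.$$
Since $K$ is compact, Lipschitz functions are dense in $C(K)$ (by Stone--Weierstrass, or by direct truncation), so this suffices for weak convergence by a standard $3\epsilon$-approximation argument.

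For the converse, I would use the Skorokhod representation theorem. Since $\nu_n \to \nu$ weakly on the Polish (in fact compact) space $K$, one can realize $V_n \sim \nu_n$ and $V \sim \nu$ on a common probability space with $V_n \to V$ almost surely. Since $d(V_n, V)^2 \leq D^2$ is uniformly bounded, the bounded convergence theorem gives $\E{d(V_n, V)^2} \to 0$. As $(V_n, V)$ is a particular coupling of $\nu_n$ with $\nu$, we conclude
$$\Was{\nu_n,\nu}^2 \leq \E{d(V_n,V)^2} \to 0.$$

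Neither direction is really difficult given the tools the paper is already using. The only mild subtlety is keeping the argument self-contained: Lemma \ref{lem:bestcoupling} is cited but stated in the Appendix, and the Skorokhod representation is invoked as a black box (it has already been used in Section \ref{sec:permutonprocesses} via \cite{Kallenberg}, so this is consistent with the paper's style). If one wanted to avoid Skorokhod, one could instead use tightness plus a direct argument via approximating $\nu$ by discrete measures on a finite $\epsilon$-net of $K$ and bounding Wasserstein distance via the transport plans between the discretizations, but the Skorokhod route is shorter.
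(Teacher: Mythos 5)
Your proposal is correct, and one half of it coincides exactly with the paper: for the direction ``weak convergence implies $\Was{\nu_n,\nu}\to 0$'' the paper also invokes the Skorokhod representation theorem and the bounded convergence theorem, using compactness only through the finite diameter of $K$. Where you diverge is the other direction. The paper does not use the optimal coupling of Lemma \ref{lem:bestcoupling} there: it takes any coupling $(V_n,V)$ with $\E{d(V_n,V)^2}\leq 2\Was{\nu_n,\nu}^2$, applies Markov's inequality to get $\pr{d(V_n,V)>\delta}\leq 2\Was{\nu_n,\nu}^2/\delta^2$, and tests against an arbitrary continuous $f$, splitting according to whether $d(V_n,V)\leq\delta$ (handled by uniform continuity of $f$ on the compact $K$) or not (handled by $\|f\|_{\infty}$ times the probability bound). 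You instead test against Lipschitz functions, where Cauchy--Schwarz gives the clean bound $L\,\Was{\nu_n,\nu}$, and then use density of Lipschitz functions in $C(K)$ (which indeed holds on a compact metric space, e.g.\ via Stone--Weierstrass) with a $3\epsilon$-argument. Both routes are fine: the paper's avoids the density step and needs only a near-optimal coupling, while yours replaces the $\delta$--Markov bookkeeping with a standard approximation argument; your appeal to Lemma \ref{lem:bestcoupling} is harmless but not actually needed, since a coupling within any fixed factor of the infimum suffices.
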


%

\begin{lemma} \label{lem:bestcoupling}
Let $\nu, \nu'$ be Borel probability measures on a compact metric space $(K,d)$.
There exists a coupling $(V,W)$ of $\nu$ with $\nu'$ such that $\Was{\nu,\nu'} = \E{d(V,W)^2}^{1/2}$.
\end{lemma}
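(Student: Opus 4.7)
The plan is to invoke compactness on the space of couplings. Let $\Pi(\nu,\nu')$ denote the set of Borel probability measures on $K \times K$ with first marginal $\nu$ and second marginal $\nu'$. The Wasserstein distance is defined as the infimum of $\E{d(V,W)^2}^{1/2}$ over $\Pi(\nu,\nu')$, so by definition I can pick a minimizing sequence of couplings $(V_n,W_n)$, viewed as probability measures $\pi_n$ on $K \times K$, such that $\E{d(V_n,W_n)^2} \to \Was{\nu,\nu'}^2$.

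Since $K$ is compact, so is $K \times K$, and therefore the space of Borel probability measures on $K \times K$ is compact in the weak topology by Prokhorov's theorem. Hence there exists a subsequence $\pi_{n_k}$ converging weakly to some probability measure $\pi$ on $K \times K$. I then need to verify two things: first, that $\pi$ still has marginals $\nu$ and $\nu'$, and second, that $\pi$ achieves the infimum.

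For the marginals, observe that the projection maps $K \times K \to K$ are continuous, so weak convergence of $\pi_{n_k}$ to $\pi$ implies weak convergence of the marginals. Since the marginals of every $\pi_{n_k}$ are exactly $\nu$ and $\nu'$, the marginals of $\pi$ are $\nu$ and $\nu'$ as well, so $\pi \in \Pi(\nu,\nu')$. Writing $(V,W) \sim \pi$, this gives a coupling.

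For optimality, the key point is that $d^2 : K \times K \to \R$ is continuous and bounded (since $K$ is compact). Weak convergence of $\pi_{n_k}$ to $\pi$ therefore implies $\int d^2 \, d\pi_{n_k} \to \int d^2 \, d\pi$, that is, $\E{d(V_{n_k},W_{n_k})^2} \to \E{d(V,W)^2}$. Combined with the minimizing property of the sequence this yields $\E{d(V,W)^2} = \Was{\nu,\nu'}^2$, completing the argument. I do not expect any real obstacle here; the only subtlety would be in a non-compact setting where one would need tightness and lower semicontinuity of the cost, neither of which is an issue since $K$ is compact and $d^2$ is bounded and continuous.
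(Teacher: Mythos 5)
Your proposal is correct and follows essentially the same route as the paper: a minimizing sequence of couplings, Prokhorov compactness of measures on the compact space $K\times K$, weak convergence of a subsequence, and continuity and boundedness of $d^2$ to pass to the limit. Your explicit check that the marginals are preserved under the weak limit is a detail the paper leaves implicit, but the argument is the same.
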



\begin{lemma} \label{lem:extension}
Let $K$ be a complete metric space and $S \subset [0,1]$ a countable dense set.
Suppose $f : S \to K$ has modulus of continuity $m$ on $S$, i.e., $d(f(t),f(s)) \leq m(|t-s|)$
for $s,t \in S$. Then $f$ has an extension to $[0,1]$ with modulus of continuity $m$.
\end{lemma}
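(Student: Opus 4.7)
The plan is to define the extension pointwise by using sequences in $S$ converging to each target point of $[0,1]$, invoke completeness to get a limit, and then check that the inequality $d(f(t),f(s)) \leq m(|t-s|)$ survives in the limit. I will implicitly use that $m$ is continuous and nondecreasing with $m(0)=0$, which is the standard meaning of ``modulus of continuity'' and is certainly the case in the sole application of the lemma above, where $m(\delta) = C_\omega \sqrt{\delta}$.

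First, fix $t \in [0,1]$ and choose any sequence $\{s_n\} \subset S$ with $s_n \to t$. Since $\{s_n\}$ is Cauchy in $[0,1]$, the bound $d(f(s_n), f(s_k)) \leq m(|s_n - s_k|)$ together with $m(\delta) \to 0$ as $\delta \to 0$ implies $\{f(s_n)\}$ is Cauchy in $K$. By completeness, $\{f(s_n)\}$ has a limit, which I define to be $\tilde{f}(t)$.

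Next I would check that $\tilde{f}$ is well-defined. If $\{s'_n\} \subset S$ is another sequence converging to $t$, the interleaved sequence $s_1, s'_1, s_2, s'_2, \ldots$ also converges to $t$, so by the previous step the image under $f$ of that interleaved sequence is Cauchy and hence has a single limit in $K$; in particular the subsequences $\{f(s_n)\}$ and $\{f(s'_n)\}$ share that limit. Taking the constant sequence when $t \in S$ shows $\tilde{f}(t) = f(t)$, so $\tilde{f}$ genuinely extends $f$.

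Finally I would verify the modulus inequality globally. For arbitrary $s,t \in [0,1]$, pick $s_n, t_n \in S$ with $s_n \to s$ and $t_n \to t$. Then
\[
d(\tilde{f}(s), \tilde{f}(t)) = \lim_{n \to \infty} d(f(s_n), f(t_n)) \leq \lim_{n \to \infty} m(|s_n - t_n|) = m(|s - t|),
\]
the first equality by continuity of $d$ and construction of $\tilde{f}$, and the last by continuity of $m$. There is no substantive obstacle here: the argument is a textbook density/completeness extension. The only subtlety is the regularity assumed of $m$, namely $m(\delta)\to 0$ as $\delta \to 0$ (needed for the Cauchy step) and (upper semi)continuity of $m$ (needed for passage to the limit in the final inequality); both properties hold in all uses of the lemma in this paper.
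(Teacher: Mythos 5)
Your proof is correct and follows essentially the same route as the paper's: define the extension via limits of $f$ along $S$-sequences (using completeness and $m(\delta)\to 0$ for the Cauchy step), check independence of the approximating sequence, and pass to the limit in the modulus bound using continuity of $m$. Your explicit interleaving argument for well-definedness and your remark on the regularity needed of $m$ are slightly more careful than the paper's statement of the same facts, but the argument is the same.
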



\bibliographystyle{habbrv}

\end{document}